\title{Convergence to the uniform distribution of moderately self-interacting diffusions on compact Riemannian manifolds\footnote{This version differs from the accepted AIHP version only by minor notational adjustments. A last-minute oversight had led to a confusing presentation of equation \eqref{eq:limitODEnewreal}, conflating it with equation \eqref{eq:limitODEnew}, which is newly introduced here for clarity.}}
\author{Simon Holbach\footnote{s.holbach@uni-mainz.de}\qquad Olivier Raimond\footnote{oraimond@parisnanterre.fr}}
\begin{document}

\maketitle

\begin{abstract}
We consider a self-interacting diffusion $X$ on a smooth compact Riemannian manifold $\M$, described by the stochastic differential equation
\[
dX_t = \sqrt{2} dW_t(X_t)- \beta(t) \nabla V_t(X_t)dt,
\]
where $\beta$ is suitably lower-bounded and grows at most logarithmically, and $V_t(x)=\frac{1}{t}\int_0^t V(x,X_s)ds$ for a suitable smooth function $V\colon \M^2\to\R$ that makes the term $-\nabla V_t(X_t)$ self-repelling. We prove that almost surely the normalized occupation measure $\mu_t$ of $X$ converges weakly to the uniform distribution $\cU$, and we provide a polynomial rate of convergence for smooth test functions. The key to this result is showing that if $f\colon\M\to\R$ is smooth, then $\mu_{e^t}(f)$ shadows the flow generated by the ordinary differential equation
\[
    \dot x_t=-x_t+\cU(f).
\]
\end{abstract}

\section{Introduction and main result}

Let $\M$ a smooth compact Riemannian manifold, and let $\cM(\M)$ denote the space of finite signed Borel measures on $\M$. For any smooth function $V\colon \M^2\to\R$ and any $\mu\in \cM(\M)$ we write
\begin{equation}\label{eq:V_mu}
	V_\mu(x):=\mu(V(x,\cdot)):=\int_\M V(x,y)\mu(dy) \quad \text{for all $x\in\M$.}
\end{equation}
A stochastic process $X$ is called a self-interacting diffusion on $\M$, if it satisfies an equation of the type
\begin{equation}\label{eq:selfinteractingM}
	dX_t = \sqrt{2} dW_t(X_t)- \beta(t) \nabla V_{\mu_t}(X_t)dt,
\end{equation}
where $V\colon \M^2\to\R$ is smooth, $\nabla$ denotes the surface gradient on $\M$, $\beta\colon[0,\infty)\to\R$ is a continuous function, 
\begin{equation*}%\label{eq:mu_t}
    \mu_t=\frac 1 t \int_0^t\delta_{X_s}ds
\end{equation*}
is the normalized occupation measure of the process $X$ up until time $t$, and $W$ is a standard Brownian vector field on $\M$.

Here, $V$ describes the type of self-interaction and $\beta$ is the temporal weight that is given to the self-interaction mechanism. The asymptotic behavior of $X$ has been studied for different choices of $V$ and $\beta$, and we provide a summary of corresponding results and methods in Section \ref{sec:context} below.

Let us now describe the assumptions under which we work in the present article. We always write $|x|$ for the euclidean norm of $x\in\R^d$, by $x \cdot y$ we denote the euclidean inner product of $x,y\in\R^d$, and we set
\[
    \norm{f}_\infty=\sup_{x\in\M}\abs{f(x)} 
\]
for any function $f\colon \M\to\R^d$.

\begin{ass}\label{ass:V}
There is a smooth function $v\colon \M\to\R^N$ with $\norm{v}_\infty=1$ and
\begin{equation}\label{eq:assvcentered}
    \int_\M v(x) dx=0
\end{equation}
such that
\[
    V(x,y)=v(x)\cdot v(y) % \quad \text{for all $x,y\in\M$.}
\]
for all $x,y\in\M$.
\end{ass}

With this choice of $V$, \eqref{eq:V_mu} yields
\begin{equation*}\label{eq:V_t}
    V_{\mu_t}(x)= \mu_t(v) \cdot v(x) = \left(\frac 1 t \int_0^t v(X_s)ds\right) \cdot v(x).
\end{equation*}
Hence, the drift term $-\nabla V_{\mu_t}(X_t)$ is self-repelling in the sense that it tends to drive $v(X_t)$ away from the temporal mean of $(v(X_s))_{s\in[0,t]}$. This interpretation is particularly intuitive in the case where
\[
    \M=\S^n=\{x\in\R^{n+1}:\abs{x}=1\}
\]
and $v(x)=x$, so that
\[
    V(x,y)= x\cdot y=\cos(d(x,y))  \quad \text{for all $x,y\in\S^n$,}
\]
where $d$ is the geodesic distance on $\S^n$.

Next, we explain our assumptions on $\beta$. Here and everywhere else in this article, $C$ and $t_0$ denote finite, positive, deterministic constants, the exact value of which is unimportant and may change from one step to the next with no indication.

\begin{ass}\label{ass:beta}
The function $\beta\colon [0,\infty)\to\R$ is differentiable and there are $a\in(0,\infty)$ and $\gamma\in(0,1]$ such that
\begin{equation*}%\label{eq:assbeta}
    \abs{\beta(t)}\le a\log t \quad \text{and} \quad 
    \abs{\beta'(t)}\le C t^{-\gamma} \quad \text{for all $t\ge t_0$.}
\end{equation*}
\end{ass}

Assumption \ref{ass:beta} allows the weight $\beta(t)$ of the self-repelling drift $-\nabla V_{\mu_t}$ to increase to infinity, but not fast enough to fully compensate the normalization $\frac 1 t$ of the occupation measure. The generic case that we will usually have in mind is that of
\[
    \beta(t)=b\log(t+1) \quad \text{with $b>0$.}
\]
Other valid choices include $\beta(t)=b\log(\log(t+e))$ or simply $\beta\equiv b$. Note that we cap $\gamma$ at 1 only for simplicity, as $\gamma>1$ provides no meaningful improvement for the estimates that are relevant to our proofs and only results in awkward case distinctions. Also note that the normalization $\norm{v}_\infty=1$ in Assumption \ref{ass:V} makes sure that the parameter $a$ is actually meaningful and cannot simply be hidden in $v$.

Assumption \ref{ass:beta} does not require $\beta$ to be non-negative, but we will assume a suitable lower bound in Assumption \ref{ass:betaLambda} below. Before we can state it precisely, we need to introduce some notation. Let $m\in\R^N$. Setting
\begin{equation*}%\label{eq:Z}
    Z(m)=\int_\M e^{-m\cdot v(x)}dx,
\end{equation*}
we define a probability measure $\Pi(m)$ on $\M$ via
\begin{equation}\label{eq:PIfirst}
    \Pi(m)(dx)=\frac{e^{-m\cdot v(x)}}{Z(m)}dx.
\end{equation}
We also interpret $\Pi$ as a function on $\cM(\M)$ by setting
\begin{equation}\label{eq:PIfunctionalmeasure}
    \Pi(\mu):=\Pi(\mu(v)) \quad \text{for all $\mu\in\cM(\M)$.}
\end{equation}
Furthermore, for any probability measure $\mu$ on $\M$ and any $f\in(\LL^2(\mu))^N$ we write
\[
    \Cov_\mu(f)=\big(\mu(f_if_j)-\mu(f_i)\mu(f_j)\big)_{i,j\in\{1,\ldots,N\}}\in\R^{N\times N}.
\]

\begin{ass}\label{ass:betaLambda}
There is a $\beta_0\ge0$ such that
\begin{equation}\label{eq:betalowerbound}
    \beta(t)\ge -\beta_0 >-\frac{1}{\Lambda} \quad \text{for all $t\ge t_0$,}
\end{equation}    
where
\begin{equation*}
    \Lambda=\sup_{m\in\R^N}\left(\sup_{x\in\R^N, \, \abs{x}=1} x^T\Cov_{\Pi(m)}(v)x\right).
\end{equation*}
\end{ass}

Assumption \ref{ass:betaLambda} means that while the weight factor in front of the self-repelling drift $-\nabla V_{\mu_t}$ is in fact allowed to turn negative, we do require a suitable lower bound that depends on $v$. In other words, we allow a limited amount of self-attraction. Clearly,
\begin{equation}\label{eq:Lambdatrivialbound}
    0<\Lambda \le \norm{v}_\infty^2=1,
\end{equation}
so \eqref{eq:betalowerbound} makes sense. In particular, $\beta\equiv b$ is covered in our setting if and only if $b > - 1/\Lambda$.

Our main result is the following. 

\begin{thm}\label{thm:main}
    There is a finite positive constant $\kappa$ that depends only on the dimension of $\M$, such that the following holds: if Assumptions \ref{ass:V}, \ref{ass:beta} and \ref{ass:betaLambda} hold and the constants $a$ and $\gamma$ from Assumption \ref{ass:beta} satisfy
    \[
        \gamma>2a\kappa,
    \]
    then for all $f\in C^\infty(\M)$ we have
    \begin{equation}\label{eq:thm}
        \limsup_{t\to\infty} \frac{\log \abs{(\mu_t-\cU)(f)}}{\log t} \le -\eta \quad \text{almost surely,}
    \end{equation}
    where $\cU$ denotes the uniform distribution on $\M$ and
    \begin{equation*}%\label{eq:eta}
        \eta=\min\left\{\frac{\gamma}{2}-a\kappa, 1-\Lambda\beta_0\right\}>0.
    \end{equation*}
\end{thm}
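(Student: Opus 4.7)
The plan is to follow the stochastic approximation / ``ODE method'' developed by Bena\"im and collaborators for self-interacting diffusions. Pass to the logarithmic time scale $s = \log t$ and set $\tilde\mu_s := \mu_{e^s}$; a direct computation from $t\mu_t(f)=\int_0^t f(X_r)\,dr$ gives
\[
    \frac{d}{ds}\tilde\mu_s(f) = -\tilde\mu_s(f) + f(X_{e^s}),
\]
so the theorem amounts to replacing $f(X_{e^s})$ by $\cU(f)$ up to controlled error. The instantaneous generator of $X$ at time $t$ is $L_t\varphi = \Delta\varphi - \beta(t)\nabla V_{\mu_t}\cdot\nabla\varphi$ and, being a reversible Langevin generator on $\M$, its invariant probability is exactly $\Pi(\beta(t)\mu_t(v))$; in particular $\Pi(0)=\cU$ by \eqref{eq:assvcentered}.

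For each $m\in\R^N$ and $f\in C^\infty(\M)$ I would let $Q_m f$ denote the centred solution of the Poisson equation $L_m Q_m f = \Pi(m)(f)-f$, where $L_m = \Delta - \nabla(m\cdot v)\cdot\nabla$. Standard elliptic estimates on the compact manifold $\M$ yield Sobolev bounds $\norm{Q_m f}_{C^k}\le C(1+\abs{m})^{\kappa_k}\norm{f}_{C^{k+k_0}}$ for dimension-dependent exponents $\kappa_k$. Applying It\^o's formula to $\varphi(t,x):=Q_{\beta(t)\mu_t(v)}f(x)$ along $X$, and combining with the previous display, I would rewrite the dynamics as
\[
    \frac{d}{ds}\tilde\mu_s(f) = -\tilde\mu_s(f) + \Pi(\beta(e^s)\tilde\mu_s(v))(f) + dM_s + R_s\,ds,
\]
where $M_s$ is a martingale with quadratic variation controlled by $e^{-s}\norm{\nabla Q}_\infty^2\lesssim e^{-s}s^{2\kappa}$, and $R_s$ collects corrections from $\partial_t Q$: through the time variation of $\mu_t$ (of order $1/t$) and of $\beta(t)$ (of order $t^{-\gamma}$ by Assumption \ref{ass:beta}), these produce terms of size at most $t^{-\gamma}(\log t)^\kappa$.

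The analysis then proceeds in two steps. First I apply the decomposition componentwise to $v$: since $\cU(v)=0$, a mean-value identity yields $\Pi(\beta m)(v) = -\beta A(\beta m)\,m$ with
\[
    A(m):=\int_0^1 \Cov_{\Pi(\xi m)}(v)\,d\xi,
\]
which is positive semidefinite with $\norm{A(m)}\le\Lambda$. Setting $y_s=\tilde\mu_s(v)$, the decomposition reads
\[
    \frac{d}{ds}y_s = -\bigl(I+\beta(e^s)A(\beta(e^s)y_s)\bigr)y_s + dM_s + R_s\,ds,
\]
and Assumption \ref{ass:betaLambda} makes $I+\beta(e^s)A$ uniformly positive with smallest eigenvalue $\ge 1-\Lambda\beta_0$ for $s$ large. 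A Gronwall argument for $\abs{y_s}^2$ combined with Burkholder--Davis--Gundy and a Borel--Cantelli step along a sub-exponential grid delivers, almost surely, $\abs{\tilde\mu_s(v)} = O(e^{-\eta s})$ with the exponent $\eta=\min\{\gamma/2-a\kappa,\,1-\Lambda\beta_0\}$ reflecting the competition between the deterministic contraction rate $1-\Lambda\beta_0$ and the noise/remainder rate $\gamma/2-a\kappa$ (which absorbs the $(\log t)^\kappa$ losses from each Poisson-equation step). Once this is known, I reinsert it in the decomposition for arbitrary $f\in C^\infty(\M)$: smoothness of $m\mapsto\Pi(m)$ at $0$ gives $\abs{\Pi(\beta(e^s)y_s)(f)-\cU(f)}\le C\beta(e^s)\abs{y_s}$, and solving the linear ODE $\dot\nu = -\nu + \cU(f) + \varepsilon_s$ with forcing $\abs{\varepsilon_s} = o(e^{-(\eta-\delta)s})$ for every $\delta>0$ concludes.

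The principal technical obstacle is the control of the Poisson solution $Q_m f$ and its dependence on $m$: one needs not only $C^k$ bounds polynomial in $\abs{m}$, but also Lipschitz-in-$m$ estimates, since $\partial_m Q_m f$ feeds $R_s$ through the chain rule applied to $\beta(t)\mu_t(v)$. The dimension-dependent constant $\kappa$ in the theorem is precisely the exponent produced by these Sobolev/elliptic bounds; the constraint $\gamma>2a\kappa$ is what guarantees that, after the $\beta(t)^\kappa\sim(\log t)^\kappa$ losses at every step, the martingale and drift-correction contributions decay strictly faster than $e^{-s}$ times the polylog, so that $\eta>0$ and the ODE shadowing holds almost surely.
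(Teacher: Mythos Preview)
Your overall architecture coincides with the paper's: pass to logarithmic time, use the Poisson equation for the frozen generator $L_m=\Delta-\nabla(m\cdot v)\cdot\nabla$ to trade $\delta_{X_{e^s}}$ for $\Pi(\beta(e^s)\mu_{e^s})$ plus a martingale and a remainder, first close the estimate for $y_s=\mu_{e^s}(v)$ via the covariance structure of $\Pi$ and Assumption~\ref{ass:betaLambda}, and then feed this back into the equation for general $f$. Your mean-value identity $\Pi(\beta m)(v)=-\beta A(\beta m)m$ with $A(m)=\int_0^1\Cov_{\Pi(\xi m)}(v)\,d\xi$ is exactly equivalent to the paper's strictly convex potential $J_t$ (Lemma~\ref{lem:gradient}), and the subsequent Gronwall/BDG/Borel--Cantelli scheme matches Lemma~\ref{prop:meantozero}.

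There is, however, a quantitative error in your Poisson-equation bounds that makes your account of the rate $\gamma/2-a\kappa$ and of the constraint $\gamma>2a\kappa$ internally inconsistent. You assert $\norm{Q_m f}_{C^k}\le C(1+\abs{m})^{\kappa_k}\norm{f}_{C^{k+k_0}}$, i.e.\ polynomial growth in $\abs{m}$, and accordingly write the losses as $\beta(t)^\kappa\sim(\log t)^\kappa$ and the quadratic variation as $e^{-s}s^{2\kappa}$. The bounds actually available (via log-Sobolev/Poincar\'e constants and Bakry--\'Emery, see \cite[Lemma~2.3]{Raimond} and Lemma~\ref{lem:Qt}) are \emph{exponential} in the oscillation of the potential: $\norm{Q_\mu f}_\infty\le C e^{\kappa\norm{V_\mu}_\infty}\norm{f}_\infty$. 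Since $\norm{V_{\beta(t)\mu_t}}_\infty\le a\log t$, this gives $\norm{Q_t f}_\infty\le Ct^{a\kappa}\norm{f}_\infty$, a genuine power of $t$ (equivalently $e^{a\kappa s}$ in log-time), not a polylog. This is precisely why the martingale quadratic variation is of order $e^{-(1-2a\kappa)s}$ rather than $e^{-s}s^{2\kappa}$, why the $\partial_t Q_t$ remainder is of order $t^{2a\kappa-\gamma}$ rather than $t^{-\gamma}(\log t)^\kappa$, and hence why the rate is $\gamma/2-a\kappa$ and the constraint $\gamma>2a\kappa$ is required. With polylog losses the argument would instead yield $\eta=\min\{\gamma/2,\,1-\Lambda\beta_0\}$ with no condition linking $a$ and $\gamma$, contradicting the statement you are proving. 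A polynomial-in-$\abs{m}$ bound on $Q_m f$ is not to be expected in general: for large $\abs{m}$ the measure $\Pi(m)$ concentrates near the minima of $m\cdot v$ and the spectral gap of $L_m$ degenerates exponentially, so the inverse $Q_m$ blows up at the same rate.
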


\begin{remark}
By density of $C^\infty(\M)$ in $C(\M)$ with respect to $\norm{\cdot}_\infty$, \eqref{eq:thm} implies that weak convergence of $\mu_t$ to $\cU$ holds almost surely, i.e. we have the ergodicity property
\[
    \frac1t\int_0^tf(X_s)ds = \mu_t(f) \xrightarrow{t\to\infty} \cU(f) \quad \text{almost surely for all $f\in C(\M)$.}
\]
Note, however, that the speed of convergence from \eqref{eq:thm} does not carry over to all $f\in C(\M)$ via a straight forward density argument (there is no uniformity in $f\in C^\infty(\M)$ or anything similar in \eqref{eq:thm}).
\end{remark}

\begin{remark}
The constant $\kappa$ that appears in Theorem \ref{thm:main} is the same one as in \cite[Lemma 2.3]{Raimond}, where its exact value is not given. However, one can go through its proof in \cite{Raimond} and check that it is possible to take $\kappa = 2(n+3)$, with $n$ the dimension of $\mathbb{M}$ (note in particular, that the integral in the expression for $\kappa_1$ on page 254 of \cite{Raimond} can be calculated, giving $\kappa_1=n$). Since explaining the background to this constant requires the introduction of some objects that will be used in the proof of Theorem \ref{thm:main}, we defer further discussion to Lemma \ref{lem:Qt} and Remark \ref{rem:kappa} below.
\end{remark}

\begin{remark}
It could be possible to relax Assumption \ref{ass:V} to $V$ being a Mercer kernel, i.e. $V(x,y)=V(y,x)$ for all $x,y\in\M$ and
\[
    \int_\M \int_\M V(x,y) f(x) f(y) dxdy \ge 0  \quad \text{for all $f\in\LL^2(dx)$},
\]
where $dx$ denotes integration with respect to the Riemannian measure (compare \cite[p.\ 2]{BenaimRepel}). Then, by Mercer's Theorem (see \cite[Theorem 3.a.1]{Koenig}), there are an orthonormal basis $(e_n)_{n\in\N}$ of $\LL^2(dx)$ and a sequence $(\lambda_n)_{n\in\N}\subset[0,\infty)$ such that
\begin{equation}\label{eq:Mercer}
    V(x,y)=\sum_{n\in\N}\lambda_{n}e_{n}(x)e_{n}(y) \quad \text{for all $x,y\in\M$}.   
\end{equation}
Notable examples of Mercer kernels can be found in \cite[Section 2.3]{sidIII}). In particular, if $\M$ is a submanifold of $\R^d$ and $v\colon [0,\infty)\to\R$ is completely monotonic, then $V(x,y)=v(|x-y|^{2})$, $x,y\in\M$, is a Mercer kernel.

Assumption \ref{ass:V} means that we restrict ourselves to such $V$ where the expansion in \eqref{eq:Mercer} is finite (an assumption that is also used in \cite{BenaimRepel}). This makes it possible to define $\Pi$ as a function on $\R^N$ in \eqref{eq:PIfirst}, and hence to argue as we do in Lemmas \ref{lem:Pi} to \ref{lem:gradient}.
\end{remark}

\begin{example}[Weak self-interaction]\label{ex:weak}
    Let
    \[
        \beta\equiv b>-\frac{1}{\Lambda}.
    \]
    Depending on the sign of $b$, this corresponds either to a self-repelling ($b>0$) or self-attracting ($b<0$) diffusion, and since $\beta$ is constant in time, we speak of weak self-interaction.
    
    Then $\beta'\equiv 0$ and for any $a>0$ we have $|\beta(t)| \le a\log t$ for all $t\ge t_0=e^{b/a}$. Hence, Theorem \ref{thm:main} can be applied with $\gamma=1$, $\beta_0=\max\{0,-b\}$ and any $a>0$, so for all $f\in C^\infty(\M)$ we get
    \begin{equation*}
        \limsup_{t\to\infty} \frac{\log \abs{(\mu_t-\cU)(f)}}{\log t} \le - \min\left\{\frac12, 1- \Lambda |b|\right\} \quad \text{almost surely.}
    \end{equation*}
%    \begin{equation*}
%        \limsup_{t\to\infty} \frac{\log d_\cM(\mu_t,\cU)}{\log t} \le
%        -  \quad \text{almost surely.}
%        \begin{cases}
%            -\displaystyle\frac12+\left(\displaystyle\frac12-\Lambda |b|\right), 
%            & \text{if $b \in %\left(-\displaystyle\frac{1}{\Lambda},-\displaystyle\frac{1}{2\Lambda}\right)$,}\\[5mm]
%            -\displaystyle\frac12,
%            &\text{if $b\in\left[-\displaystyle\frac{1}{2\Lambda},\infty\right)$}.
%        \end{cases}
%   \end{equation*}
    In the particular case $\M=\S^n$ and $v(x)=x$, this strengthens part (i) of \cite[Theorem 4.5]{sidI}. In Section \ref{sec:sphere}, we provide a more detailed investigation of this connection and also a proof that we can actually weaken Assumption \ref{ass:betaLambda} in this situation (see Proposition \ref{prop:weaksphere}).
\end{example}

\begin{example}[Moderate self-repulsion]
    Let
    \[
        \beta(t)=b\log(t+1) \quad \text{with $b>0$.}
    \]
    In this situation, $\beta$ is positive and increases to infinity, but more slowly than the normalization factor $t$ in $\mu_t$, so we speak of moderate self-repulsion.
    
    Then $|\beta'(t)|\le Ct^{-1}$ and for any $a>b$ there is a $t_0>0$ such that $|\beta(t)| \le a\log t$ for all $t\ge t_0$. Hence, Theorem \ref{thm:main} can be applied whenever $b<\frac{1}{2\kappa}$, and for all $f\in C^\infty(\M)$ we get
    \begin{equation*}
        \limsup_{t\to\infty} \frac{\log \abs{(\mu_t-\cU)(f)}}{\log t} \le - \left(\frac12-b\kappa\right) \quad \text{almost surely.}
    \end{equation*}
\end{example}

\subsection{Context}\label{sec:context}

The asymptotic behavior of self-interacting diffusions has been studied with various degrees of generality concerning the state space $\M$ and the type of self-interaction governed by $V$. The weight $\beta(t)$ is usually chosen as either $\beta\equiv b$ or $\beta(t)=b t$ with $b>0$, which are sometimes referred to as weak and strong self-interaction respectively. In this sense, the prototypical case $\beta(t)=b\log(t+1)$ of Assumption \ref{ass:beta} corresponds to moderate self-interaction.

There are a number of case studies and some general results for $\M=\R^n$ (e.g.\ \cite{Cranston,RaimondOld,Herrmann,Chambeu}), but our focus lies on compact state spaces. Some of the results in the following summary are more general than others, but all of them are valid for $\M=\S^n$ and include the important case $V(x,y)=\cos(d(x,y))$ (self-repulsion) or $V(x,y)=-\cos(d(x,y))$ (self-attraction).\footnote{Of course, it is somewhat arbitrary to include the sign that distinguishes repulsion from attraction in $V$ instead of $\beta$, but this choice reinforces the interpretation of $V$ as the type of interaction and $\beta$ as a weight (even though our Assumptions \ref{ass:beta} and \ref{ass:betaLambda} do not require $\beta$ to be non-negative).} The general expectation is that a diffusion with sufficient self-attraction will asymptotically be concentrated around (or even converge to) some limit random variable $X_\infty\in\M$, while a self-repelling diffusion (on a compact state space) will quite contrarily be uniformly distributed in the limit.

We will now give a brief overview of the methods that have been used in these different cases, the corresponding results are summarized in Table \ref{table:overview}.

The case of weak self-interaction has been studied the most thoroughly, in particular in the series of papers \cite{sidI,sidII,sidIII,sidIV}. Under mild conditions on $V$, the authors link $\mu_t$ to a measure-valued ordinary differential equation and use this to precisely describe its asymptotic behavior for some specific choices of $V$. The same approach is used in \cite{Raimond} to treat the case of moderate self-interaction. This turns out to be more delicate and only the self-attracting case is solved satisfyingly. The case of moderate self-repulsion is the content of the present paper, and we use a similar strategy. A detailed explanation of the general idea of this method is presented in Section \ref{sec:outline} below, including a discussion of the differences between \cite{sidI}, \cite{Raimond}, and the present paper. The case of strong interaction has been studied with different methods. In \cite{BenaimRepel}, the authors rewrite \eqref{eq:selfinteractingM} as a time-homogeneous proper stochastic differential equation for an extended variable $(X_t,Y_t)\in\S^n\times\R^d$ and prove that in the self-repelling case it is Harris recurrent and exponentially ergodic, where the invariant distribution in restriction to $X_t$ is the uniform distribution. Almost sure convergence in the self-attracting case is proved in \cite{Gauthier}, again with arguments that involve the shadowing of an ordinary differential equation, but in a completely different way than in \cite{sidI}, \cite{Raimond}, or the present paper.

Of course, it seems plausible that increasing the strength of the repulsion or attraction by increasing the weight $\beta$ should not change the results qualitatively. If $\tilde\beta$ is asymptotically larger than $\beta$, and the self-repelling diffusion with weight $\beta$ is asymptotically uniformly distributed, then the same should be true for the self-repelling diffusion with weight $\tilde \beta$. Similarly, if the self-attracting diffusion with weight $\beta$ converges to a random variable $X_\infty$ in some sense, then the self-attracting diffusion with weight $\tilde\beta$ should converge to some $\tilde X_\infty$. However, such comparison theorems are not available, and they do not seem to be within reach. In view of these considerations, it also seems counter-intuitive that we need $a$ to be sufficiently small in Theorem \ref{thm:main} and that the rate of convergence decreases when $a$ increases. This can be thought of as a technical assumption that is an "artifact" of our method. 

\begin{table}
    \centering
    \begin{tabular}{ |c||c|c|c| } \hline
        $\M=\S^n$ &
        \makecell{strong \\[1mm] \textover[c]{$\beta(t)=bt$}{$\beta(t)=b\log(t+1)$}} &
        \makecell{moderate \\[1mm] $\beta(t)=b\log(t+1)$} &
        \makecell{weak \\[1mm] \textover[c]{$\beta(t)=b$}{$\beta(t)=b\log(t+1)$}} \\ \hline\hline
        \makecell{self-attraction \\[1mm] $V(x,y)=-\cos(d(x,y))$} &
        \makecell{$X_t\to X_\infty$ a.s. \\[1mm] (\cite{Gauthier})} &
        \makecell{$\mu_t\xrightarrow{w} \delta_{X_\infty}$ a.s. \\[1mm] (\cite{Raimond})} &
        \makecell{$\mu_t\xrightarrow{w} \mu_\infty(b,n)$ a.s. \\[1mm] (\cite{sidI,sidII,sidIII,sidIV}, this paper)} \\ \hline
        \makecell{self-repulsion \\[1mm] $V(x,y)=\cos(d(x,y))$} &
        \makecell{$\mu_t\xrightarrow{w}\cU$ a.s. \\[1mm] (\cite{BenaimRepel,BenaimRepelInfty} for $n=1$)} &
        \makecell{$\mu_t\xrightarrow{w}\cU$ a.s. \\[1mm] (this paper) } &
        \makecell{$\mu_t\xrightarrow{w}\cU$ a.s. \\[1mm] (\cite{sidI,sidII,sidIII,sidIV}, this paper)} \\ \hline
    \end{tabular}
    \caption{An (incomplete) overview of the literature on the asymptotics of self-interacting diffusions on $\S^n$, assuming $b>0$. In the case of weak self-attraction, we have $\mu_\infty(b,n)=\cU$ if $b\le n+1$, and else $\mu_\infty(b,n)$ is a Gaussian distribution centered around a random $X_\infty\in\S^n$, but with a deterministic variance that depends on $b$ and $n$. The present paper is primarily focussed on moderate and weak self-repulsion, but also contains a partial improvement of this result on weak self-attraction. In contrast to the other citations in these cases, the present paper also studies the speed of convergence.}
    \label{table:overview}
\end{table}

\begin{remark}\label{rem:sqrt2}
    The factor $\sqrt{2}$ in front of $dW_t(X_t)$ in \eqref{eq:selfinteractingM} is absent both in \cite{Raimond} (where this equation is mentioned explicitly only in the abstract) and in \cite{sidI} (where the corresponding equation is the first one of the article).
    
    In \cite{Raimond} this factor $\sqrt{2}$ is hidden in the vector fields $e_i$, so that \eqref{eq:selfinteractingM} is entirely equivalent to \cite[(1)]{Raimond} and the results from \cite{Raimond} are compatible with our setting with no adjustment of the parameters.
    
    In \cite{sidI} however, this is not the case and this factor $\sqrt{2}$ leads to a factor $2$ in \cite[(11)]{sidI} when compared to our definition of $\Pi$ in \eqref{eq:PIfunctionalmeasure}, and also to a factor $1/2$ in the definition of $A_\mu$ just below \cite[(2)]{sidI} when compared to our definition in \eqref{eq:At}. This has to be taken into account when comparing our results with those from \cite{sidI}.
\end{remark}

\subsection{Outline of proof}\label{sec:outline}

In order to get a grip of the long time behavior of
\[
    \mu_t=\frac 1 t \int_0^t\delta_{X_s}ds,
\]
we calculate its time evolution. First, we have
\[
     \partial_t\mu_t=\frac1t\left(-\mu_t+\delta_{X_t}\right),
\]
where the derivative is to be understood as the derivative of a real function pointwise in all $f\in C(\M)$. In order to eliminate the factor $\frac1t$, we look at the dynamics on an exponential time scale, i.e.\
\begin{equation*}%\label{eq:ableitungmu}
    \partial_t\mu_{e^t}=-\mu_{e^t}+\delta_{X_{e^t}}.
\end{equation*}
If $t>0$ is large, the distribution of $X_t$ should be close to the equilibrium of the current drift potential $\beta(t) V_{\mu_t}(x)$, i.e.\ to the probability distribution $\Pi(\beta(t)\mu_t)$ as defined in \eqref{eq:PIfunctionalmeasure}. If our intuition of the process is correct, $\Pi(\beta(t)\mu_t)$ on the other hand should be close to the uniform distribution $\cU$ on $\M$. Therefore, we set
\begin{equation}\label{eq:def-eps1}
    \eps^1_t:= \delta_{X_{e^t}}-\Pi(\beta(e^t)\mu_{e^t})
\end{equation}
and
\begin{equation}\label{eq:def-eps2}
    \eps^2_t:= \Pi(\beta(e^t)\mu_{e^t})-\cU,
\end{equation}
so that
\begin{equation}\label{eq:ODE+eps}
    \partial_t\mu_{e^t}=-\mu_{e^t}+\cU + \eps^1_t + \eps^2_t.
\end{equation}
The main idea is that if $\eps^1_t$ and $\eps^2_t$ are asymptotically negligible in a suitable sense, then the trajectory $t\mapsto\mu_{e^t}$ should in some sense shadow the flow of the formal limit equation
\begin{equation}\label{eq:limitODEnew}
    \dot\nu_t=-\nu_t+\cU.
\end{equation}
Indeed, we can show that for all $f\in C^\infty(\M)$, the trajectory $t\mapsto\mu_{e^t}(f)$ is almost surely an asymptotic pseudotrajectory of the real-valued ordinary differential equation.
\begin{equation}\label{eq:limitODEnewreal}
    \dot x_t=-x_t+\cU(f),
\end{equation}
so that in particular $\mu_{e^t}(f)$ almost surely converges to $\cU(f)$. The details of this argument are given in Section \ref{sec:mainproof} below, but first we devote Sections \ref{sec:eps1} and \ref{sec:eps2} to the required analysis of the asymptotics of $\eps^1$ and $\eps^2$.

Our proof strategy is inspired by \cite{sidI} and \cite{Raimond}. In these works, the authors make use of the relation
\begin{equation}\label{eq:ODE+eps1}
    \partial_t\mu_{e^t}=-\mu_{e^t}+\Pi(\beta(e^t)\mu_{e^t}) + \eps^1_t,
\end{equation}
which will also be very useful for us in Section \ref{sec:eps2}. The formal limit equation corresponding to \eqref{eq:ODE+eps1} is
\begin{equation}\label{eq:limitODEold}
    \dot \nu_t=-\nu_t+\Pi(\beta(e^t)\nu_t).
\end{equation}
Note that under \eqref{eq:assvcentered} we have $\Pi(\cU)=\cU$, and so \eqref{eq:limitODEold} can be viewed as a variant of the limit equation \eqref{eq:limitODEnew} that is less specific to a particular situation. For constant $\beta$ as in \cite{sidI}, the (measure-valued) equation \eqref{eq:limitODEold} is homogeneous in time and hence after establishing that $\mu_{e^t}$ shadows it, powerful results from the theory of dynamical systems can be used to study the long-time behavior of $\mu_t$ for several different choices of $V$ (cf. \cite[Sections 3 and 4]{sidI}). For non-constant $\beta$ as in \cite{Raimond}, this link can still be established under certain conditions (cf. \cite[Theorems 2.1 and 2.2]{Raimond}), but it is not as fruitful, since \eqref{eq:limitODEold} is no longer homogeneous in time. In particular, \cite[Theorems 2.1 and 2.2]{Raimond} are not used in the proof of the convergence result for moderately self-attracting diffusions on the sphere (\cite[Theorem 3.1]{Raimond}) which is instead proved "by hand".

The limit equation \eqref{eq:limitODEnew} is much simpler than \eqref{eq:limitODEold}, because it is tailor-made for cases in which we expect $\mu_t$ to converge to the uniform distribution (while in other cases we might not expect a deterministic limit at all). Introducing the second error term $\eps^2$ allows us to move the explicit dependence of the problem on $\beta$ entirely into the error terms. %Our ability to deal with $\eps^2$ in turn depends crucially on the Assumptions \ref{ass:V} and \ref{ass:betaLambda}, which are less important for our ability to deal with $\eps^1$.

While many techniques in the present article are extensions of the works in \cite{Raimond} and \cite{sidI}, its main innovations lie in replacing \eqref{eq:limitODEold} with \eqref{eq:limitODEnew} and in the arguments in Section \ref{sec:eps2} that are necessary to deal with the extra error term that is caused by this approach. This approach allows us to cover for the first time the case of moderate self-repulsion (\cite{Raimond} only deals with one particular example of moderate self-attraction), and it also comes with the benefit of providing a speed of convergence.

\section{Proof of the main result}\label{sec:proof}

For Section \ref{sec:eps1}, we only need the Assumptions \ref{ass:V} and \ref{ass:beta}. In Sections \ref{sec:eps2} and \ref{sec:mainproof} on the other hand, we suppose that all of the Assumptions \ref{ass:V}, \ref{ass:beta}, and \ref{ass:betaLambda} hold.

\subsection{Dealing with $\eps^1$}\label{sec:eps1}

The aim of this section is the following result:

\begin{prop}\label{prop:eps1}
    If $2a\kappa<\gamma$ and $f\in C^\infty(\M)$, then
    \[
        \limsup_{t\to\infty} \frac 1 t \log\left(\sup_{s\ge0} \abs{\int_t^{t+s}\eps^1_r(f)dr}\right) \le -\left(\frac{\gamma}{2}-a\kappa\right) \quad \text{almost surely}.
    \]
\end{prop}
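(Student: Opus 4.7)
The strategy is the classical Poisson-equation trick: rewrite $\eps^1_r(f)$ as $L_{m_r}$ applied to a suitable corrector, then peel off the time evolution via It\^o's formula. For each $m\in\R^N$, let $L_m=\Delta-\nabla(m\cdot v)\cdot\nabla$ be the generator reversible with respect to $\Pi(m)$, and let $Q_m f\in C^\infty(\M)$ denote the centered solution to
\[
    L_m Q_m f = f - \Pi(m)(f).
\]
The crucial input, which I expect to appear as Lemma \ref{lem:Qt} (the analogue of \cite[Lemma 2.3]{Raimond}), is that there is a constant $C=C(f)$ with
\[
    \norm{Q_m f}_\infty + \norm{\nabla Q_m f}_\infty + \norm{\nabla_m Q_m f}_\infty \le C e^{\kappa|m|} \quad \text{for all $m\in\R^N$,}
\]
with the same $\kappa$ as in Theorem \ref{thm:main}.

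Setting $m_u:=\beta(u)\mu_u(v)$ and substituting $u=e^r$ converts the object of interest into
\[
    \int_t^{t+s}\eps^1_r(f)\,dr = \int_{e^t}^{e^{t+s}} \frac{L_{m_u}Q_{m_u}f(X_u)}{u}\,du.
\]
Applying It\^o's formula to the time-dependent function $(u,x)\mapsto Q_{m_u}f(x)$ along $X_u$ and then integrating by parts against $1/u$ decomposes the right-hand side into (i) a boundary piece $[Q_{m_u}f(X_u)/u]_{u=e^t}^{u=e^{t+s}}$, (ii) the benign integral $\int Q_{m_u}f(X_u)/u^2\,du$, (iii) an adiabatic integral $-\int \partial_u Q_{m_u}f(X_u)/u\,du$ coming from the $u$-dependence of $m_u$, and (iv) a martingale $N_s=-\int \sqrt{2}\,\nabla Q_{m_u}f(X_u)/u\cdot dW_u(X_u)$. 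Assumption \ref{ass:beta} yields $|m_u|\le a\log u$ for $u$ large, hence $e^{\kappa|m_u|}\le u^{a\kappa}$; pieces (i) and (ii) are therefore of size at most $C e^{(a\kappa-1)t}$, which is $\le C e^{-(\gamma/2-a\kappa)t}$ because $\gamma\le 1$.

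For piece (iii), the chain rule gives $\partial_u m_u = \beta'(u)\mu_u(v) + (\beta(u)/u)(v(X_u)-\mu_u(v))$, so by Assumptions \ref{ass:V} and \ref{ass:beta} we have $|\partial_u m_u| \le C(u^{-\gamma}+u^{-1}\log u)$; combining with $\norm{\nabla_m Q_m f}_\infty \le C e^{\kappa|m|}$ gives an integrand of size at most $Cu^{a\kappa-1}(u^{-\gamma}+u^{-1}\log u)$, whose tail integral from $e^t$ is $O(e^{(a\kappa-\gamma)t}) + O(t\, e^{(a\kappa-1)t})$, both within the required rate. For piece (iv), the quadratic variation satisfies $\langle N\rangle_\infty \le C\int_{e^t}^\infty u^{2a\kappa-2}\,du \le C e^{(2a\kappa-1)t}$, which is finite and shrinks to zero since $2a\kappa<\gamma\le 1$. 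Burkholder--Davis--Gundy then gives $\mathbb{E}[\sup_{s\ge 0}|N_s|^p] \le C_p e^{(a\kappa-1/2)pt}$ for all $p\ge 2$, and choosing $p$ large together with Borel--Cantelli along integer times yields $\sup_{s\ge 0}|N_s| \le e^{(a\kappa-1/2+\delta)t}$ almost surely eventually, for any $\delta>0$; since $\gamma\le 1$, this is $\le e^{-(\gamma/2-a\kappa)t+\delta t}$. Taking $\log/t$, passing to the limsup, and sending $\delta\to 0$ concludes the proof. The main obstacle I foresee lies in the bound on $\norm{\nabla_m Q_m f}_\infty$ that Lemma \ref{lem:Qt} must supply: differentiating the Poisson equation in $m$ shows $\partial_{m_i}Q_m f$ itself solves a secondary Poisson equation whose right-hand side involves $v_i\nabla Q_m f$, and one must verify that solving this secondary equation costs only a bounded-in-$m$ factor rather than another exponential in $|m|$.
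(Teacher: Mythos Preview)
Your approach is the paper's: the same Poisson-equation/It\^o decomposition into boundary, benign, adiabatic, and martingale pieces, and the same Borel--Cantelli argument along integers for the martingale (the paper uses $p=2$ together with an auxiliary $\delta>0$, which is equivalent to your ``choose $p$ large'').

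Your anticipated obstacle is real, and its resolution is not quite what you hope. Differentiating $L_m Q_m f = f-\Pi(m)(f)$ in $m_i$ gives $L_m(\partial_{m_i}Q_m f) = \nabla v_i\cdot\nabla Q_m f - \partial_{m_i}\Pi(m)(f)$, and inverting $L_m$ on a right-hand side that already carries a factor $e^{\kappa|m|}$ \emph{does} cost another exponential: the bound actually available (the paper quotes it as Lemma~\ref{lem:Qt}(3), from \cite[Lemma~2.8]{Raimond}, in the form $\|\partial_t Q_t f\|_\infty \le C(\log t)^{3/2} t^{2a\kappa-\gamma}$) corresponds in your parametrization to $\|\nabla_m Q_m f\|_\infty \le C e^{2\kappa|m|}$ up to polynomial-in-$|m|$ factors, not $e^{\kappa|m|}$. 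This weakens your piece~(iii) estimate from $O(e^{(a\kappa-\gamma)t})$ to $O(e^{(2a\kappa-\gamma)t})$, and is precisely where the hypothesis $2a\kappa<\gamma$ bites (your optimistic bound would only require $a\kappa<\gamma$). Since $2a\kappa-\gamma = -2(\gamma/2-a\kappa) \le -(\gamma/2-a\kappa)$, the corrected estimate still lies within the target rate, so the argument survives; only the intermediate bound on piece~(iii) needs adjusting.
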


While Proposition \ref{prop:eps1} is mostly just a reformulation of \cite[Theorem 2.1]{Raimond}, the exact value on the right hand side is not provided there. Because of this, and since many of the intermediate results and the objects involved will also be crucial in Section \ref{sec:eps2} below, we will now give a detailed summary of Section 2 from \cite{Raimond}, including a streamlined proof of Proposition \ref{prop:eps1}.

Let us fix $\mu\in\cM(\M)$ and consider the time-homogeneous stochastic differential equation with no self-interaction
\begin{equation*}%\label{eq:dynamicsVmu}
	dY_t = \sqrt{2} dW_t(Y_t)- \nabla V_{\mu}(Y_t)dt,
\end{equation*}
where we think of the dynamics as those arising from freezing the drift potential in \eqref{eq:selfinteractingM} at some time $t_0$, so that formally $\mu=\beta(t_0)\mu_{t_0}$. These dynamics can also be described via the infinitesimal generator
\begin{equation}\label{eq:At}
    A_\mu f = \Delta f-\nabla V_\mu \cdot\nabla f \quad \text{for all }f\in \cD(A_\mu)\supset C^2(\M),
\end{equation}
and the corresponding equilibrium is given by $\Pi(\mu)$ as defined in \eqref{eq:PIfunctionalmeasure}. Let $(P_s^{\mu})_{s\ge0}$ denote the transition semigroup on $\LL^2(\Pi(\mu))=\LL^2(dx)$ generated by $A_\mu$. Then $A_\mu$ satisfies a Poincar\'e inequality and therefore $P_s^{\mu}f$ converges to $\Pi(\mu)(f)$ exponentially fast with respect to the $\LL^2$-norm for any $f\in\LL^2(dx)$ (see Sections 1.1 and 1.4.3 of \cite{Wang}). Using this and basic semigroup theory, one can easily show that
\begin{equation*}%\label{eq:Qt}
    Q_\mu f:=-\int_0^\infty P^{\mu}_s(f-\Pi(\mu)(f))ds \quad \text{for all }f\in\LL^2(dx),
\end{equation*}
is well-defined and satisfies
\begin{equation*}%\label{eq:Atinverse}
    A_\mu Q_\mu f=Q_\mu A_\mu f=f-\Pi(\mu) (f) \quad \text{for all }f\in\cD(A_\mu),
\end{equation*}
so $Q_\mu$ is "almost an inverse to $A_\mu$".

Now set
\[
A_t:=A_{\beta(t)\mu_t}, \quad Q_t:=Q_{\beta(t)\mu_t}.
\]
Then for all $f\in C^\infty(\M)$ we can rewrite \eqref{eq:def-eps1} as
\[
    \eps^1_t(f)=f(X_{e^t})-\Pi(\beta(e^t)\mu_{e^t})(f)=A_{e^t}Q_{e^t}f(X_{e^t}).
\]
If we set
\begin{equation}\label{eq:Ft}
    F^f_t(x):=\frac{1}{t}Q_tf(x), %\quad x\in\M, t\ge0,
\end{equation}
applying the change of variables $r\mapsto \log r$ and then Ito's formula yields
\begin{equation}\label{eq:epsIto}
        \int_s^t\eps^1_r(f)dr
        =\int_{e^s}^{e^t}A_rF^f_r(X_r)dr =F^f_{e^t}(X_{e^t})-F^f_{e^s}(X_{e^s})- \int_{e^s}^{e^t} \dot F^f_r (X_r)dr + M^{f,s}_t,
\end{equation}
where $(M^{f,s}_t)_{t\ge s}$ is a martingale with
\begin{equation}\label{eq:anglebracket}
     \langle M^{f,s},M^{g,s}\rangle_t= \int_{s}^{t} e^r \nabla F^f_{e^r}(X_{e^r})\cdot \nabla F^g_{e^r}(X_{e^r}) dr \quad \text{for all $f,g\in C^\infty(\M)$}
\end{equation}
(compare \cite[Section 3.3]{Raimond} and \cite[Remark 2.2]{sidI}). Therefore, in order to estimate the integral over $\eps^1$, we need to estimate $F^f_t$, $\nabla F^f_t$, and the time derivative $\dot F^f_t$, which can be done with the help of the following lemma.

\begin{lem}\label{lem:Qt}
There is a constant
\begin{equation}\label{eq:kappa}
    \kappa \in(0,\infty)
\end{equation}
depending only on the dimension of $\M$ such that for all $f\in C^\infty(\M)$ and $t\ge t_0$ the following estimates hold.
\begin{enumerate}
    \item
    $\norm{Q_tf}_\infty \le C t^{a \kappa}\norm{f}_\infty$,
    \item
    $\norm{\nabla Q_tf}_\infty \le C (1+\log t)^{1/2}t^{a \kappa}\norm{f}_\infty$,
    \item
    $\norm{\partial_t Q_t f}_\infty \le C (\log t)^{3/2} t^{2a\kappa-\gamma} \norm{f}_\infty$.
\end{enumerate}
\end{lem}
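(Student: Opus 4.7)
The plan is to reduce the three estimates to uniform bounds on $Q_m := Q_{\Pi(m)}$ for parameters $m\in\R^N$ with $|m|\le a\log t$, and then substitute. Since $\norm{v}_\infty=1$ and $|\mu_t(v)|\le1$, Assumption \ref{ass:beta} gives $|m_t|\le a\log t$ for $m_t:=\beta(t)\mu_t(v)$, and $Q_t=Q_{m_t}$ by definition. All three bounds will follow by tracking how constants in the spectral theory of the generator $A_m=\Delta-\nabla(m\cdot v)\cdot\nabla$ deteriorate in $|m|$; I write $P^m_s$ for the associated semigroup on $\LL^2(\Pi(m))$.

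For (i), I would write $Q_mf=-\int_0^\infty P^m_s(f-\Pi(m)(f))\,ds$ and split at some $s_0>0$. On $[0,s_0]$, the contractivity $\norm{P^m_s g}_\infty\le\norm{g}_\infty$ gives a contribution $\le 2s_0\norm{f}_\infty$. On $[s_0,\infty)$, I would combine two ingredients: the short-time ultracontractive bound $\norm{P^m_1}_{\LL^2(\Pi(m))\to \LL^\infty}\le Ce^{c_1|m|}$, obtained by comparing $P^m$ to the heat semigroup on $\M$ since the density $d\Pi(m)/dx$ has $\LL^\infty$-ratios controlled by $e^{2|m|}$; and the Poincar\'e inequality for $A_m$ giving $\LL^2(\Pi(m))$-decay at rate $\lambda(m)\ge ce^{-c_2|m|}$, a standard bounded-perturbation fact. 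Optimizing in $s_0$ yields $\norm{Q_m f}_\infty\le Ce^{c_3|m|}\norm{f}_\infty$, and substituting $|m_t|\le a\log t$ gives (i) with $\kappa=c_3$, a quantity depending only on the heat-kernel constants and Poincar\'e constant of $\M$.

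For (ii), I would differentiate under the integral and use the short-time gradient estimate $\norm{\nabla P^m_s g}_\infty\le Ce^{c|m|}s^{-1/2}\norm{g}_\infty$ (the heat-kernel gradient bound on a compact manifold, stable under the smooth drift $-\nabla(m\cdot v)$) together with exponential decay from the Poincar\'e gap for large $s$. Splitting at the crossover time $1/\lambda(m)\asymp e^{c_2|m|}$ and integrating $s^{-1/2}$ over that range produces the extra $(1+\log t)^{1/2}$ factor, while the amplitude stays $e^{O(|m|)}$. For (iii), I would apply the chain rule to $Q_t=Q_{m_t}$. The time derivative $\dot m_t=\beta'(t)\mu_t(v)+\frac{\beta(t)}{t}(v(X_t)-\mu_t(v))$ is $O(t^{-\gamma}\log t)$ by Assumption \ref{ass:beta} (using $\gamma\le1$). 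Duhamel's formula $\partial_m P^m_s g=\int_0^s P^m_r(\partial_m A_m)P^m_{s-r}g\,dr$ together with $\partial_m A_m=-\nabla v\cdot\nabla$ turns $\partial_m Q_m$ essentially into $Q_m\circ(-\nabla v\cdot\nabla Q_m)$; invoking (i) and (ii) picks up an additional factor of $t^{a\kappa}$ beyond the gradient bound, and combining with the size of $\dot m_t$ produces the stated $(\log t)^{3/2}t^{2a\kappa-\gamma}$ bound.

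The hardest part will be tracking the $|m|$-dependence of both the ultracontractive constant and the spectral gap of $A_m$ in the sharpest possible form, since their interplay is exactly what fixes $\kappa$ and hence the range of $a$ for which Theorem \ref{thm:main} is nontrivial. This bookkeeping coincides with the content of \cite[Lemma 2.3]{Raimond}, whose optimization yields the explicit value $\kappa=2(n+3)$ mentioned in the excerpt; I would follow that argument for (i) and (ii), while spelling out the chain-rule and Duhamel step for (iii) separately, as the explicit $t$-dependence in the time derivative is not directly addressed there.
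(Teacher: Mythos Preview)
Your approach is correct and coincides with the paper's, which simply quotes \cite[Lemma 2.3]{Raimond} for (i)--(ii) and \cite[Lemma 2.8]{Raimond} for (iii) after substituting $\norm{V_{\beta(t)\mu_t}}_\infty\le|\beta(t)|\,|\mu_t(v)|\,\norm{v}_\infty\le a\log t$; the ultracontractivity/Poincar\'e sketch you give for (i)--(ii) and the chain-rule/Duhamel computation for (iii) are precisely the content of those two lemmas, so no separate argument is needed for (iii). One notational slip: $Q_m:=Q_{\Pi(m)}$ is not what you mean, since in the paper's convention $Q_\mu$ depends on $\mu$ only through $\mu(v)$ and $\Pi(m)(v)\ne m$ in general---you want $Q_m$ to be the operator associated with $A_m=\Delta-\nabla(m\cdot v)\cdot\nabla$, i.e.\ $Q_\mu$ for any $\mu$ with $\mu(v)=m$, which is how you correctly use it thereafter.
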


\begin{proof}
We can take $\kappa$ as the constant of the same name from \cite[Lemma 2.3]{Raimond}, which then yields
\[
    \norm{Q_tf}_\infty
    =\norm{Q_{\beta(t)\mu_t}f}_\infty
    \le C e^{\kappa\norm{V_{\beta(t)\mu_t}}_\infty}\norm{f}_\infty.
\]
With Assumptions \ref{ass:V} and \ref{ass:beta}, we get
\[
    \norm{V_{\beta(t)\mu_t}}_\infty
    \le \abs{\beta(t)} \abs{\mu_t(v)} \norm{v}_\infty
    \le a \log t,
\]
and the first estimate follows. As shown in the proof of \cite[Lemma 2.3]{Raimond}, $\kappa$ depends only on the dimension of $\M$. The second estimate also follows from \cite[Lemma 2.3]{Raimond} in a similar way, and the third estimate can be quoted straight from \cite[Lemma 2.8]{Raimond}.
\end{proof}

\begin{remark}\label{rem:kappa}
From now on, $\kappa$ will always be the constant from \eqref{eq:kappa}. In particular, this is the same $\kappa$ that we use in Theorem \ref{thm:main}. As seen in the proof of Lemma \ref{lem:Qt} above, its origin lies in \cite[Lemmas 2.3]{Raimond} which is proved via classical results about $\log$-Sobolev and Poincar{\'e} inequalities for the operator $A_t$ and an application of the Bakry-Emery criterion. The constant $\kappa$ is derived from multiple different constants occurring in the process.  
\end{remark}

Again, consider $F^{f}_t$ as in \eqref{eq:Ft} with $f\in C^\infty(\M)$. As mentioned above, Lemma \ref{lem:Qt} is the key to
estimating the terms $F^f_t$, $\nabla F^f_t$, and $\dot F^f_t$ that occur in \eqref{eq:epsIto} and \eqref{eq:anglebracket}. Indeed, the first two estimates from Lemma \ref{lem:Qt} imply
\begin{equation}\label{eq:lemma-F}
    \norm{F^f_{e^t}}_\infty \le Ce^{-(1-a\kappa)t}\norm{f}_\infty
\end{equation}
and
\begin{equation}\label{eq:lemma-gradF}
    \norm{\nabla F^f_{e^t}}_\infty \le C(1+t)^{1/2} e^{-(1-a\kappa)t}\norm{f}_\infty.
\end{equation}
Noting that
\begin{equation*}%\label{eq:dotF}
    \dot F^f_t = -\frac{1}{t^2} Q_tf + \frac{1}{t} \d_t Q_tf,
\end{equation*}
the first and third estimates from Lemma \ref{lem:Qt} imply
\begin{equation}\label{eq:lemma-Fdot}
    \norm{\dot F^f_{e^t}}_\infty
    \le C \left(e^{(a\kappa-2)t} + t^{3/2} e^{(2a\kappa-\gamma-1)t}\right)\norm{f}_\infty
    \le C (1+t)^{3/2} e^{(2a\kappa-\gamma-1)t}\norm{f}_\infty,
\end{equation}
where the last step used that $\gamma\in(0,1]$ by Assumption \ref{ass:beta}.

\begin{proof}[Proof of Proposition \ref{prop:eps1}]
Let $t\ge t_0$, $s\ge 0$, and let $F_t=F^{f}_t$ with $f\in C^\infty(\M)$ as in \eqref{eq:Ft}. Writing $M=M^{f,t}$ for short, we get from \eqref{eq:epsIto} that
\begin{equation}\label{eq:eps1proof}
    \int_t^{t+s}\eps^1_r(f)dr
    = F_{e^{t+s}}(X_{e^{t+s}}) - F_{e^t}(X_{e^{t}})
      + \int_t^{t+s} e^r\dot F_{e^r} (X_{e^r}) dr
      + M_{t+s},
\end{equation}
and we will now estimate all of the terms on the right hand side. Let us write
\[
    \xi=\frac{\gamma}{2}-a\kappa
\]
for short. By \eqref{eq:lemma-F}, we get
\begin{equation}\label{eq:F-bound}
    \abs{F_{e^{t+s}}(X_{e^{t+s}}) - F_{e^t}(X_{e^{t}})}
    \le C e^{-(1-a\kappa)t}\norm{f}_\infty
    \le C e^{-\xi t}\norm{f}_\infty.
\end{equation}
By \eqref{eq:lemma-Fdot},
\begin{equation}\label{eq:F-dot-bound}
    \abs{\int_t^{t+s} e^r\dot F_{e^r} (X_{e^r}) dr}
    \le C\norm{f}_\infty\int_t^{t+s} (1+r)^{3/2} e^{-2\xi r} dr 
    \le C\norm{f}_\infty\int_t^{\infty} e^{-\xi r} dr
    \le C e^{-\xi t} \norm{f}_\infty.
\end{equation}
Now, let $\delta>0$. With the help of \eqref{eq:anglebracket} and \eqref{eq:lemma-gradF}, we get
\begin{equation}\label{eq:anglebracketbound}
    \langle M\rangle_{t+s}
    = \int_t^{t+s} e^r \abs{\nabla F_{e^r}(X_{e^r})}^2dr
    \le C \norm{f}_\infty\int_t^\infty(1+r) e^{-(1-2a\kappa)t}dr 
    \le C e^{-(2\xi-\delta)t} \norm{f}_\infty.
\end{equation}
Let $\eps>0$. Markov's inequality, Burkholder's inequality and \eqref{eq:anglebracketbound} imply
\[
    \P\left[ e^{(\xi-\delta)n} \sup_{s\ge 0} \abs{M_{n+s}}\ge \eps \right]
    \le \eps^{-2} e^{2(\xi-\delta)n} \E\left[\sup_{s\ge 0} \abs{M_{n+s}}^2\right]
    \le C \eps^{-2} e^{-\delta n} \norm{f}_\infty  \quad \text{for all $n\in\N$}.
\]
Since the right hand side is summable in $n\in\N$, Borel-Cantelli implies
\[
    e^{(\xi -\delta)n} \sup_{s\ge0} \abs{M_{n+s}} \xrightarrow{n\to\infty}0 \quad \text{almost surely for any $\delta>0$.}
\]
Combining this with \eqref{eq:eps1proof}, \eqref{eq:F-bound} and \eqref{eq:F-dot-bound}, we arrive at
\[
    \limsup_{n\to\infty} \frac 1 n \log \left( \sup_{s\ge0} \abs{\int_n^{n+s}\eps^1_r(f)dr}\right) \le -\xi \quad \text{almost surely.}
\]
If $\lfloor t \rfloor$ denotes the integer part of $t\in(0,\infty)$, then
\[
    \sup_{s\ge0} \abs{\int_t^{t+s}\eps^1_r(f)}dr = \sup_{s\ge0} \abs{\left(\int_{\lfloor t \rfloor}^{t+s}-\int_{\lfloor t \rfloor}^t\right)\eps^1_r(f)dr} \le 2 \sup_{s\ge0} \abs{\int_{\lfloor t \rfloor}^{{\lfloor t \rfloor}+s}\eps^1_r(f)dr},
\]
so we also get the continuous-time property as stated in the proposition.
\end{proof}

%\begin{remark}
%Looking a little closer at the proofs in \cite{Raimond}, we can see that Proposition \ref{prop:eps1} can actually be strengthened slightly to
%\[
%        \limsup_{t\to\infty} \frac 1 t \log \left( \sup_{s\ge0} \norm{\int_t^{t+s}\eps^1_rdr}\right) \le -\min\left\{\gamma-2a\kappa, \frac12-a\kappa\right\}.
%\]
%\end{remark}

\subsection{Dealing with $\eps^2$}\label{sec:eps2}

The aim of this section is to find a suitable counterpart to Proposition \ref{prop:eps1} for $\eps^2$. We start with some preparatory lemmas. Denote the total variation norm of $\mu\in\cM(\M)$ by
\begin{equation*}
    \norm{\mu}=\sup\{ \mu(f) : f\in C(\M),\, \norm{f}_\infty\le 1\},
\end{equation*}
and recall that
\begin{equation}\label{eq:PI(m)}
    \Pi(m)(dx)=\frac{e^{-m\cdot v(x)}}{Z(m)}dx \quad \text{with} \quad Z(m)=\int_\M e^{-m\cdot v(x)}dx
\end{equation}
for all $m\in\R^N$.

\begin{lem}\label{lem:Pi}
The mapping $\Pi\colon\R^N\to\cM(\M)$ is Lipschitz continuous, i.e.
\begin{equation*}
    \norm{\Pi(m)-\Pi(m')} \le C \abs{m-m'} \quad \text{for all $m,m'\in\R^N$.}
\end{equation*}
\end{lem}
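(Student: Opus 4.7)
\medskip

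\noindent\textbf{Proof proposal.} Since each $\Pi(m)$ is absolutely continuous with respect to the Riemannian measure $dx$ with density $\rho_m(x):=e^{-m\cdot v(x)}/Z(m)$, the total variation of the signed measure $\Pi(m)-\Pi(m')$ equals $\int_\M \abs{\rho_m(x)-\rho_{m'}(x)}\,dx$. So the plan is to bound this $\LL^1(dx)$-distance between the densities by a constant multiple of $\abs{m-m'}$, and the natural route is via the fundamental theorem of calculus in the parameter $m$.

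First I would compute the $m$-gradient of $\rho_m(x)$ directly from \eqref{eq:PI(m)}. A short computation using $\nabla_m Z(m)=-\int_\M v(y)e^{-m\cdot v(y)}\,dy=-Z(m)\,\Pi(m)(v)$ gives the clean identity
\[
    \nabla_m\rho_m(x)=\rho_m(x)\bigl(\Pi(m)(v)-v(x)\bigr).
\]
Since $\norm{v}_\infty=1$ by Assumption \ref{ass:V}, the factor in parentheses has euclidean norm at most $2$, so $\abs{\nabla_m\rho_m(x)}\le 2\rho_m(x)$ pointwise in $x$ and $m$. Integrating against $dx$ and using $\int_\M \rho_m(x)\,dx=1$ yields the uniform bound $\int_\M \abs{\nabla_m\rho_m(x)}\,dx\le 2$.

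Next, for fixed $m,m'\in\R^N$, I parametrize $m_s=(1-s)m'+sm$, $s\in[0,1]$, and apply the fundamental theorem of calculus pointwise:
\[
    \rho_m(x)-\rho_{m'}(x)=\int_0^1 \nabla_m\rho_{m_s}(x)\cdot(m-m')\,ds.
\]
Taking absolute values, integrating over $\M$, and swapping the order of integration via Tonelli gives
\[
    \int_\M \abs{\rho_m(x)-\rho_{m'}(x)}\,dx \le \abs{m-m'}\int_0^1\!\!\int_\M \abs{\nabla_m\rho_{m_s}(x)}\,dx\,ds \le 2\abs{m-m'},
\]
which is the claim with $C=2$.

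I do not expect any genuine obstacle: the only points requiring care are the routine justification for differentiating under the integral sign in $Z(m)$ (immediate from smoothness and compactness of $\M$) and the identification of the total variation norm with the $\LL^1(dx)$-distance of the densities. The boundedness $\norm{v}_\infty=1$ is exactly what makes the constant depend only on the data of Assumption \ref{ass:V}.
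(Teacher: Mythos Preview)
Your proof is correct and is precisely a fleshed-out version of the paper's one-line argument, which reads in its entirety: ``It is easy to check that the derivative of $\Pi$ is bounded.'' You have computed that derivative explicitly, exhibited the uniform bound, and converted boundedness of the derivative into a global Lipschitz estimate via the fundamental theorem of calculus---exactly the intended route.
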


\begin{proof}
It is easy to check that the derivative of $\Pi$ is bounded.
\end{proof}

\begin{lem}\label{lem:Pi-Z}
For all $m\in\R^N$, we have
\begin{equation}\label{eq:Pigradient}
    \Pi(m)(v)=-\nabla\log Z(m)
\end{equation}
and
\begin{equation}\label{eq:PiCov}
    \Cov_{\Pi(m)}(v)=\Hess \log Z(m),
\end{equation}
where $\Hess$ denotes the Hessian matrix.
\end{lem}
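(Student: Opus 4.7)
The proof is a direct computation in the spirit of cumulant generating functions. The plan is to differentiate $Z(m)$ twice under the integral sign and match the resulting expressions against $\Pi(m)(v)$ and $\Cov_{\Pi(m)}(v)$.

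First I would justify differentiation under the integral sign in the definition $Z(m) = \int_\M e^{-m \cdot v(x)} dx$. Since $\M$ is compact and $v$ is smooth with $\norm{v}_\infty = 1$, for $m$ in any bounded neighborhood of a fixed point in $\R^N$ the integrand and its partial derivatives in $m$ are uniformly bounded on $\M$, so dominated convergence permits exchanging $\nabla_m$ (and $\Hess_m$) with the integral.

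Once this is justified, I would compute
\[
    \nabla Z(m) = \int_\M (-v(x)) e^{-m\cdot v(x)} dx = -Z(m) \int_\M v(x) \frac{e^{-m\cdot v(x)}}{Z(m)} dx = -Z(m)\, \Pi(m)(v),
\]
so dividing by $Z(m)$ yields $\nabla \log Z(m) = -\Pi(m)(v)$, which is \eqref{eq:Pigradient}. For the second identity I would apply $\Hess = \nabla\nabla^T$ to $\log Z$ via the quotient rule: differentiating $\nabla \log Z(m) = \nabla Z(m)/Z(m)$ gives
\[
    \Hess \log Z(m) = \frac{\Hess Z(m)}{Z(m)} - \frac{\nabla Z(m)\, \nabla Z(m)^T}{Z(m)^2}.
\]
A second differentiation under the integral sign shows that the $(i,j)$-entry of $\Hess Z(m)$ equals $\int_\M v_i(x) v_j(x) e^{-m\cdot v(x)} dx = Z(m)\, \Pi(m)(v_i v_j)$, while from \eqref{eq:Pigradient} the second term contributes $\Pi(m)(v_i)\Pi(m)(v_j)$. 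Subtracting yields exactly the $(i,j)$-entry of $\Cov_{\Pi(m)}(v)$, giving \eqref{eq:PiCov}.

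There is no serious obstacle here: the whole argument is standard exponential-family/log-partition-function calculus, and the only point requiring attention is the interchange of differentiation and integration, which is immediate from the compactness of $\M$ and the smoothness of $v$.
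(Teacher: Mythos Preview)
Your proposal is correct and is exactly the ``straight forward calculation'' the paper alludes to; you have simply spelled out the standard exponential-family identities and the justification for differentiating under the integral sign that the paper leaves implicit.
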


\begin{proof}
This is a straight forward calculation.
\end{proof}

\begin{lem}\label{lem:gradient}
If we set
\begin{equation*}%\label{eq:Jt}
    J_t\colon \R^N\to\R,\quad m\mapsto\frac12\abs{m}^2+1_{\beta(e^t)\neq 0}\cdot \frac{1}{\beta(e^t)}\log Z(\beta(e^t)m),
\end{equation*}
for all $t\ge0$, then
\begin{equation}\label{eq:gradientJt}
    \nabla J_t (m)=m-\Pi(\beta(e^t)m)(v)  \quad \text{for all $m\in\R^N$ and $t\ge 0$}
\end{equation}
and
\begin{equation}\label{eq:convex}
    m \cdot \nabla J_t (m) \ge (1-\Lambda \beta_0) \abs{m}^2 \quad \text{for all $m\in\R^N$ and $t\ge t_0$}
\end{equation}
where $\Lambda$ and $\beta_0$ are the constants from Assumption \ref{ass:betaLambda}.
\end{lem}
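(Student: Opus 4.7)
The plan is to split the lemma into two parts, deriving the gradient formula first and then using it to establish the coercivity bound.

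For \eqref{eq:gradientJt}, the computation is immediate from Lemma \ref{lem:Pi-Z}. When $\beta(e^t)\neq 0$, the chain rule gives
\[
    \nabla_m \left[\frac{1}{\beta(e^t)}\log Z(\beta(e^t)m)\right] = (\nabla \log Z)(\beta(e^t)m) = -\Pi(\beta(e^t)m)(v),
\]
by \eqref{eq:Pigradient}, so adding the gradient of $\frac12|m|^2$ yields the stated formula. When $\beta(e^t)=0$, the indicator kills the second term and we are left with $\nabla J_t(m)=m$; this is consistent with \eqref{eq:gradientJt}, because $\Pi(0)(v)=\int_\M v(x)dx = 0$ by \eqref{eq:assvcentered}.

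For \eqref{eq:convex}, the key idea is to integrate $\text{Hess}\, J_t$ along the segment from $0$ to $m$, using that $\nabla J_t(0)=0$ (again by \eqref{eq:assvcentered}). Differentiating \eqref{eq:gradientJt} and invoking \eqref{eq:PiCov} gives
\[
    \text{Hess}\, J_t(m) = I + \beta(e^t)\,\Cov_{\Pi(\beta(e^t)m)}(v),
\]
so
\[
    m\cdot\nabla J_t(m) = \int_0^1 m^T\text{Hess}\,J_t(sm)\,m\,ds = |m|^2 + \beta(e^t)\int_0^1 m^T\Cov_{\Pi(\beta(e^t)sm)}(v)m\,ds.
\]

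It then remains to bound the second term. By the very definition of $\Lambda$ in Assumption \ref{ass:betaLambda}, the integrand satisfies $0 \le m^T\Cov_{\Pi(\beta(e^t)sm)}(v)m \le \Lambda|m|^2$ for every $s\in[0,1]$. For $t\ge t_0$ we split on the sign of $\beta(e^t)$: if $\beta(e^t)\ge 0$ the second term is non-negative, so the bound $(1-\Lambda\beta_0)|m|^2\le|m|^2$ is trivial; if $\beta(e^t)<0$, multiplying the upper bound $\Lambda|m|^2$ by $\beta(e^t)$ reverses the inequality and gives $\beta(e^t) m^T\Cov_{\Pi(\beta(e^t)sm)}(v)m \ge \beta(e^t)\Lambda|m|^2 \ge -\beta_0\Lambda|m|^2$ by \eqref{eq:betalowerbound}. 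Combining the two cases produces \eqref{eq:convex}.

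There is no serious obstacle here: \eqref{eq:gradientJt} is pure chain rule plus the centering assumption to handle $\beta(e^t)=0$, and \eqref{eq:convex} is a one-line fundamental-theorem-of-calculus argument whose only delicate point is the sign flip when $\beta(e^t)$ is negative — precisely the situation that motivates the definition of $\Lambda$ and the lower bound in Assumption \ref{ass:betaLambda}.
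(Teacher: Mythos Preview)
Your proof is correct and follows essentially the same route as the paper: both verify \eqref{eq:gradientJt} via \eqref{eq:Pigradient} and the centering condition $\Pi(0)(v)=0$, and both obtain \eqref{eq:convex} from the Hessian formula $\Hess J_t(m)=I+\beta(e^t)\Cov_{\Pi(\beta(e^t)m)}(v)$ together with $\nabla J_t(0)=0$. The paper packages the last step as ``the smallest eigenvalue of $\Hess J_t$ is at least $1-\Lambda\beta_0$, hence $J_t$ is strongly convex'', while you write out the equivalent line-integral and make the sign split on $\beta(e^t)$ explicit, but the underlying argument is identical.
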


\begin{proof}
Due to \eqref{eq:PI(m)} and \eqref{eq:assvcentered}, we have $\Pi(0)(v)=0$. Using this fact and \eqref{eq:Pigradient}, we can easily check that \eqref{eq:gradientJt} holds. Because of \eqref{eq:PiCov}, we get
\begin{align*}
    \Hess J_t(m)=1_{N\times N}+\beta(e^{t})\Cov_{\Pi(\beta(e^{t})m)}(v),
\end{align*}
and thanks to Assumption \ref{ass:betaLambda}, this implies that the smallest eigenvalue of $\Hess J_t(m)$ is at least $1-\Lambda \beta_0>0$. Therefore $J_t$ is strongly convex, more precisely
\begin{equation*}\label{eq:convex_pre}
    (m-m') \cdot \left(\nabla J_t (m)-\nabla J_t (m')\right) \ge (1-\Lambda \beta_0) \abs{m-m'}^2 \quad \text{for all $m,m'\in\R^N$.}
\end{equation*}
Since $\nabla J_t (0)=\Pi(0)(v)=0$, setting $m'=0$ yields \eqref{eq:convex}.
\end{proof}

\begin{remark}
$J_t(m)$, $\nabla J_t (m)$, and $\Hess J_t(m)$ are all continuous with respect to $t\in [0,\infty)$.
\end{remark}

The following lemma contains the essential step towards the main result of this section, Proposition \ref{prop:eps2}. In the sequel, we use the shorthand notation
\begin{equation}\label{eq:def-mt}
    m_t:=\mu_{e^t}(v)\in\R^N \quad \text{for all $t\ge0$,}    
\end{equation}
and $\eta$ will always be the constant from Theorem \ref{thm:main}, i.e.
\begin{equation*}
    \eta=\min\left\{\frac{\gamma}{2}-a\kappa, 1-\Lambda\beta_0\right\}>0.
\end{equation*}

\begin{lem}\label{prop:meantozero}
	If $2a\kappa<\gamma$, then
	\begin{equation*}%\label{eq:prop:meantozero}
        e^{\delta t} m_t  \xrightarrow{t\to\infty} 0 \quad \text{almost surely for all $\delta<\eta$.}
	\end{equation*}
\end{lem}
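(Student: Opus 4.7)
The plan is to convert the claim about $m_t$ into an ordinary differential equation in $\R^N$ using the explicit structure of $\Pi$, and then to combine the strong convexity provided by Lemma~\ref{lem:gradient} with the decay estimate for integrals of $\eps^1$ provided by Proposition~\ref{prop:eps1}.

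The first step is to derive a closed ODE for $m_t$. From $\partial_t \mu_{e^t} = -\mu_{e^t} + \delta_{X_{e^t}}$ evaluated componentwise at $v$, and writing $v(X_{e^t}) = \eps^1_t(v) + \Pi(\beta(e^t)\mu_{e^t})(v)$ together with the identification $\Pi(\beta(e^t)\mu_{e^t})(v) = m_t - \nabla J_t(m_t)$ coming from \eqref{eq:PIfunctionalmeasure} and \eqref{eq:gradientJt}, we would obtain
\[
    \dot m_t = -\nabla J_t(m_t) + \eps^1_t(v).
\]

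The second step is to absorb the driving term via a change of variables. Applying Proposition~\ref{prop:eps1} componentwise to $v_1,\ldots,v_N \in C^\infty(\M)$, and setting $\xi := \gamma/2 - a\kappa$, we obtain, for any $\delta' > 0$, the almost sure bound $\sup_{s\ge 0}\abs{\int_t^{t+s}\eps^1_r(v)\,dr} \le C e^{-(\xi-\delta')t}$ for all $t$ sufficiently large. In particular the improper integral $I_t := \int_t^\infty \eps^1_r(v)\,dr$ exists and satisfies $\abs{I_t} \le C e^{-(\xi-\delta')t}$. Setting $\tilde m_t := m_t + I_t$ then collapses the equation to
\[
    \dot{\tilde m}_t = -\nabla J_t(\tilde m_t - I_t).
\]

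The third step is a scalar Gronwall estimate. By \eqref{eq:convex}, $\tilde m_t \cdot \nabla J_t(\tilde m_t) \ge (1-\Lambda\beta_0)\abs{\tilde m_t}^2$, while \eqref{eq:PiCov} combined with Assumption~\ref{ass:beta} shows that the operator norm of $\Hess J_t$ is bounded by $1 + \abs{\beta(e^t)}\Lambda \le C t$. The mean value theorem therefore yields $\abs{\nabla J_t(\tilde m_t - I_t) - \nabla J_t(\tilde m_t)} \le C t \abs{I_t}$, so that
\[
    \frac{1}{2}\frac{d}{dt}\abs{\tilde m_t}^2 \le -(1-\Lambda\beta_0)\abs{\tilde m_t}^2 + C t e^{-(\xi-\delta')t}\abs{\tilde m_t},
\]
which gives, wherever $\tilde m_t \neq 0$,
\[
    \frac{d}{dt}\abs{\tilde m_t} \le -(1-\Lambda\beta_0)\abs{\tilde m_t} + C t e^{-(\xi-\delta')t}.
\]
Gronwall's inequality then produces $\abs{\tilde m_t} \le P(t) e^{-\min\{1-\Lambda\beta_0,\,\xi-\delta'\}t}$ for some polynomial $P$, and since $\abs{m_t} \le \abs{\tilde m_t} + \abs{I_t}$, the same bound holds for $\abs{m_t}$. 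As $\delta' > 0$ is arbitrary and $\eta = \min\{\xi, 1-\Lambda\beta_0\}$, we conclude $e^{\delta t} m_t \to 0$ almost surely for every $\delta < \eta$.

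The main obstacle is to avoid losing a factor of two between $\xi$ and the final exponential rate. A naive Lyapunov estimate on $\abs{m_t}^2$ without the substitution $\tilde m_t = m_t + I_t$ would require integrating by parts in $\int e^{2(1-\Lambda\beta_0)t}\,m_t \cdot \eps^1_t(v)\,dt$, and since only the trivial bound $\abs{m_t} \le 1$ is available a priori, Cauchy--Schwarz would deliver only the suboptimal rate $\xi/2$ in place of $\xi$. The change of variables $\tilde m_t = m_t + I_t$ is precisely what removes $\eps^1$ from the driving term and preserves the full exponential rate; everything else is a routine Gronwall argument powered by the strong convexity of $J_t$.
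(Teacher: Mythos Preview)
Your argument is correct and takes a genuinely different route from the paper. The paper works at the SDE level: it sets $\tilde m_t = m_t - F_{e^t}(X_{e^t})$ with $F_t = t^{-1}Q_t v$, derives via It\^o's formula an SDE for $|\tilde m_t|^2$ that contains the martingale $N_t = \int e^{\xi r}\tilde m_r\cdot dM_r$, controls $\langle N\rangle_t$ through \eqref{eq:lemma-gradF}, applies the law of the iterated logarithm, and then bootstraps iteratively (rates $\alpha/4$, $3\alpha/8$, \ldots) because $\langle N\rangle_t$ itself involves $|\tilde m_r|^2$. You instead treat Proposition~\ref{prop:eps1} as a black box, package the entire stochastic input into the tail integral $I_t=\int_t^\infty \eps^1_r(v)\,dr$, and reduce the problem to a purely deterministic differential inequality to which Gronwall applies in one shot. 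Your substitution $\tilde m_t = m_t + I_t$ plays the same structural role as the paper's $\tilde m_t = m_t - F_{e^t}(X_{e^t})$ --- both remove the rough driving term --- but yours feeds on the already-established Proposition~\ref{prop:eps1} rather than redoing the martingale analysis, and thereby avoids the iteration entirely. The paper's approach is more self-contained; yours is shorter and more modular.

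Two minor points worth tightening: the constant $C$ in your bound $|I_t|\le C e^{-(\xi-\delta')t}$ is random (the paper writes $K$ for such constants), and the passage from the differential inequality for $|\tilde m_t|^2$ to one for $|\tilde m_t|$ needs the usual care at zeros of $\tilde m_t$ --- either regularise via $\sqrt{|\tilde m_t|^2+\epsilon}$, or stay with $|\tilde m_t|^2$ and use Young's inequality $2Ct e^{-(\xi-\delta')t}|\tilde m_t|\le \epsilon|\tilde m_t|^2 + C_\epsilon t^2 e^{-2(\xi-\delta')t}$ before applying Gronwall. Neither affects the conclusion.
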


\begin{proof}
    1.) Set
    \begin{equation}\label{eq:mtilde}
        \tilde m_t:=m_t-F_{e^t}(X_{e^t})
    \end{equation}
    where
    \begin{equation*}
        F_t(x):=(F_t^{v_i}(x))_{i=1,\ldots,N}=
            t^{-1}(Q_tv_i(x))_{i=1,\ldots,N}
    \end{equation*}
    (compare \eqref{eq:Ft}). Let $t>t_0$. By \eqref{eq:mtilde} and \eqref{eq:lemma-F}, we have
    \begin{equation*}
        |e^{\delta t} m_t|
        \le e^{\delta t} |\tilde m_t| + Ce^{-(1-a\kappa-\delta)t}.
    \end{equation*}
    Since $\delta<\frac{\gamma}{2}-a\kappa<1-a\kappa$, the second summand in the above bound vanishes for $t\to\infty$, and hence it suffices to show for this lemma that
    \begin{equation}\label{eq:tildeconvergence}
        e^{\delta t} \tilde m_t  \xrightarrow{t\to\infty} 0 \quad \text{almost surely for all $\delta<\eta$.}
	\end{equation}
	Also note that for all $t>t_0$, both $m_t$ and $\tilde m_t$ have deterministic bounds, as
    \[
        \abs{m_t}=\abs{\mu_{e^t}(v)}\le \norm{v}_\infty =1   
    \] 
    and hence, by \eqref{eq:mtilde} and \eqref{eq:lemma-F},
    \begin{equation}\label{eq:mtildebounded}
        \abs{ \tilde m_t}\le 1+ Ce^{-(1-a\kappa)t}\le C.   
    \end{equation}
    
    2.) In this step, we use a similar approach as in Sections 3.1 - 3.4 of \cite{Raimond} in order to find a stochastic differential equation that is fulfilled by $|\tilde m_t|^2$. Combining \eqref{eq:ODE+eps1} and \eqref{eq:gradientJt} (from Lemma \ref{lem:gradient}), we get 
    \begin{equation*}%\label{eq:ODE+eps1+gradient}
        \dot m_t=-\nabla J_t (m_t)+ \eps^1_t(v),
    \end{equation*}
    and hence, by \eqref{eq:mtilde},
    \begin{equation*}
        d\tilde m_t=\left( -\nabla J_t (m_t) + \eps^1_t(v)\right)dt- d F_{e^t}(X_{e^t}).
    \end{equation*}
    By virtue of \eqref{eq:epsIto}, this can be rewritten as
    \begin{equation}\label{eq:increment-n}
        d\tilde m_t= \left(-\nabla J_t(m_t) - e^t\dot F_{e^t}(X_{e^t})\right)dt + dM^{v,t_0}_t,
    \end{equation}    
    where we read $M^{v,t_0}_t$ as the vector $(M^{v_i,t_0}_t)_{i=1,\ldots,N}$, and we will use the shorthand notations $M_t=M^{v,t_0}_t$ and $M^i_t=M^{v_i,t_0}_t$. Next, we set
    \begin{equation}\label{eq:H}
        H_t=-e^t \dot F_{e^t}(X_{e^t})- \left(\nabla J_t(m_t)-\nabla J_t(\tilde m_t) \right)
    \end{equation}
    and rewrite \eqref{eq:increment-n} as
    \begin{equation}\label{eq:SDEmtilde}
        d\tilde m_t= \big(-\nabla J_t(\tilde m_t) + H_t\big)dt +dM_t.
    \end{equation}
    Ito's formula yields
    \begin{equation*}%\label{eq:mIto}
        d|\tilde m_t|^2
        = 2 \tilde m_t \cdot d \tilde m_t + \sum_{i=1}^N d \langle M^i\rangle_t,
    \end{equation*}
    and by plugging in \eqref{eq:SDEmtilde} and the expression for $\langle M^i\rangle_t$ from \eqref{eq:anglebracket}, we get
    \begin{equation}\label{eq:SDE-n^2}
        d|\tilde m_t|^2
        = \left(-2 \tilde m_t \cdot \nabla J_t(\tilde m_t) +2 \tilde m_t\cdot H_t +e^t\sum_{i=1}^N \abs{\nabla F_{e^t}^{v_i}(X_{e^t})}^2\right)dt+2 \tilde m_t\cdot dM_t.         
    \end{equation}
    
    3.) Our next goal is to find a suitable upper bound for the drift in \eqref{eq:SDE-n^2}. For the rest of the proof we assume that $t\ge t_0$ and 
    \[
        0<\alpha< 2\eta=\min\left\{ \gamma-2a\kappa, 2(1-\Lambda \beta_0)\right\}.
    \]
    By \eqref{eq:lemma-Fdot},
    \begin{equation}\label{eq:Hbound1}
        \abs{e^t \dot F_{e^t}(X_{e^t})}
        \le C (1+t)^{3/2} e^{(2a\kappa-\gamma)t}
        \le C e^{-\alpha t}.
    \end{equation}
    By \eqref{eq:gradientJt} and Lemma \ref{lem:Pi}, we get
    \begin{align*}
        \abs{\nabla J_t(m_t)-\nabla J_t(\tilde m_t)}
        &\le \abs{m_t-\tilde m_t}+ \abs{\Pi(\beta(e^t)m_t)(v) - \Pi(\beta(e^t)\tilde m_t)(v)} \\
        &\le (1+C|\beta(e^t)|)\abs{m_t-\tilde m_t} \\
        &= (1+C|\beta(e^t)|)\abs{F_{e^t}(X_{e^t})}.
    \end{align*}
    and, with the help of \eqref{eq:lemma-F} and the logarithmic bound for $\abs{\beta}$ from Assumption \ref{ass:beta}, we arrive at
    \begin{equation}\label{eq:Hbound2}
        \abs{\nabla J_t(m_t)-\nabla J_t(\tilde m_t)}
        \le  C (1+t) e^{(a\kappa-1)t}
        \le C e^{-\alpha t}.
    \end{equation}
    Combining \eqref{eq:Hbound1}, \eqref{eq:Hbound2}, and \eqref{eq:mtildebounded}, we can conclude that with $H$ from \eqref{eq:H} we have
    \begin{equation}\label{eq:Htermdecay}
        \abs{\tilde m_t\cdot H_t} \le C e^{-\alpha t}.
    \end{equation}
    For the third summand of the drift term in \eqref{eq:SDE-n^2}, we apply \eqref{eq:lemma-gradF} and get
    \begin{equation}\label{eq:gradientdecay}
        e^t\sum_{i=1}^N \abs{\nabla F_{e^t}^{v_i}(X_{e^t})}^2
        \le C (1+t)e^{(2a\kappa-1)t}
        \le C e^{-\alpha t}.
    \end{equation}
    Finally, plugging \eqref{eq:Htermdecay} and \eqref{eq:gradientdecay} as well as \eqref{eq:convex} from Lemma \ref{lem:gradient} into \eqref{eq:SDE-n^2} yields
	\begin{equation}\label{eq:SDI-n}
        d|\tilde m_t|^2\le \left( -2 (1-\Lambda \beta_0)|\tilde m_t|^2 + C e^{-\alpha t}\right)dt+2 \tilde m_t \cdot dM_t.
    \end{equation}
    
	4.) With \eqref{eq:SDI-n} at hand, we can now investigate the asymptotics of $\tilde m_t$. Setting
	\[
	    \xi:=2(1-\Lambda \beta_0)
	\]
	and plugging \eqref{eq:SDI-n} into
    \[
        d\left( e^{\xi t}|\tilde m_t|^2 \right)=\xi e^{\xi t}|\tilde m_t|^2dt+ e^{\xi t}d|\tilde m_t|^2,
    \]
	we get
	\begin{align}\label{eq:step4}
        |\tilde m_t|^2
        &\le e^{-\xi (t-t_0)} |\tilde m_{t_0}|^2 + C\int_{t_0}^t e^{-\xi (t-r)-\alpha r} dr + 2\int_{t_0}^t e^{-\xi (t-r)}\tilde m_r \cdot  dM_r
        \le Ce^{-\alpha t} + 2e^{-\xi t}N_t,
    \end{align}
    where
    \[
        N_t=\int_{t_0}^t e^{\xi r}\tilde m_r\cdot dM_r.
    \]
    By \eqref{eq:anglebracket} and \eqref{eq:gradientdecay}, we have
    \begin{equation}\label{eq:angleestimate1}
        \langle N\rangle_t 
        \le \int_{t_0}^t e^{2\xi r} \abs{\tilde m_r}^2 e^{r} \abs{\nabla F_{e^r}(X_{e^r})}^2 dr \le C \int_{t_0}^t e^{(2\xi-\alpha)r} \abs{\tilde m_r}^2 dr.
    \end{equation}
    Since $\tilde m$ is bounded (see \eqref{eq:mtildebounded}) and $\alpha<2\xi$, \eqref{eq:angleestimate1} yields
	\begin{equation}\label{eq:angleestimate2}
        \langle N\rangle_t \le Ce^{(2\xi-\alpha) t},
	\end{equation}
	so the law of the iterated logarithm implies
	\begin{equation}\label{eq:step4bpre}
        \limsup_{t\to\infty} \frac{\abs{N_t}}{e^{\left(\xi -\frac{\alpha}{2}\right)t}\log(t)} <\infty \quad \text{almost surely.}
	\end{equation}
    In analogy to the way we treat constants $C$, we will now write $K$ for an almost surely finite non-negative random variable, that may change from one step to the next with no indication. Using this notational convention, \eqref{eq:step4bpre} is equivalent to
    \begin{equation}\label{eq:step4b}
        \abs{N_t}\le K e^{\left(\xi -\frac{\alpha}{2}\right)t}\log(t) \quad \text{for all $t>t_0$}.
	\end{equation}
    Plugging \eqref{eq:step4b} into \eqref{eq:step4} (and enlarging $t_0$, if necessary) yields
    \begin{align*}%\label{eq:step4}
        |\tilde m_t|
        \le \left( Ce^{-\alpha t} + Ke^{-\frac{\alpha}{2} t}\log(t) \right)^{\frac 12}
        \le K e^{-\frac{\alpha}{4} t}(\log(t))^{\frac 12}
        \quad \text{for all $t>t_0$},
    \end{align*}
    and hence
	\begin{equation}\label{eq:step4c}
         e^{\delta t} \tilde m_t  \xrightarrow{t\to\infty} 0 \quad \text{almost surely for all $\delta<\frac\alpha4$.}
	\end{equation}
    In particular, if $\delta<\frac\alpha4$, then $e^{\delta t} \tilde m_t$ is almost surely bounded. We can use this fact in \eqref{eq:angleestimate1} in order to improve \eqref{eq:angleestimate2} to
    \begin{equation*}
        \langle N\rangle_t \le K e^{(2\xi-(\alpha+2\delta)) t}
	\end{equation*}
	for any $\delta<\frac{\alpha}{4}$. Then we use the same argument as from \eqref{eq:angleestimate2} to \eqref{eq:step4c} in order to improve \eqref{eq:step4c} to
	\begin{equation*}
        e^{\delta t} \tilde m_t  \xrightarrow{t\to\infty} 0 \quad \text{almost surely for all $\delta<\frac{3\alpha}{8}$.}
	\end{equation*}
	Iterating this argument shows that for any $n\in\N$ we have
	\begin{equation*}
        e^{\delta t} \tilde m_t  \xrightarrow{t\to\infty} 0 \quad \text{almost surely for all $\delta<\frac{(2^n-1)\alpha}{2^{n+1}}$.}
	\end{equation*}
	Since $n$ can be arbitrarily large and $\alpha$ arbitrarily close to $2\eta$, it follows that \eqref{eq:tildeconvergence} holds, and thus the proof is completed.
\end{proof}

\begin{remark}
    Note that Lemma \ref{prop:meantozero} can be interpreted as a statement on the polynomial decay of the drift potential $\beta(t) V_{\mu_t}$ of \eqref{eq:selfinteractingM}. More precisely, since $\norm{V_{\mu_t}}_\infty \le \abs{m_{\log t}}$ (by \eqref{eq:V_t} and \eqref{eq:def-mt}), Lemma \ref{prop:meantozero} and Assumption \ref{ass:beta} imply
	\[
        \limsup_{t\to\infty} \frac{\log \norm{\beta(t)V_{\mu_t}}_\infty}{\log t} \le -\eta \quad \text{almost surely},
    \]
    provided that $2a\kappa<\gamma$.
\end{remark}

We can now prove the main result of this section. 

\begin{prop}\label{prop:eps2}
    If $2a\kappa<\gamma$, then almost surely
    \[
            \limsup_{t\to\infty} \frac 1 t \log \norm{\eps^2_t} \le -\eta.
    \]
\end{prop}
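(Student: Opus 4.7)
My plan is to reduce $\|\eps^2_t\|$ directly to $|\beta(e^t)|\cdot|m_t|$ via the Lipschitz continuity of $\Pi$ proved in Lemma \ref{lem:Pi}, and then apply the polynomial decay of $m_t$ from Lemma \ref{prop:meantozero} together with the logarithmic upper bound on $\beta$ from Assumption \ref{ass:beta}. The key observation that makes everything work is that $\Pi(\mu) = \Pi(\mu(v))$ by definition \eqref{eq:PIfunctionalmeasure}, so $\Pi(\beta(e^t)\mu_{e^t}) = \Pi(\beta(e^t) m_t)$, and moreover the centering condition \eqref{eq:assvcentered} combined with \eqref{eq:PI(m)} gives $\Pi(0)(dx) = \frac{dx}{Z(0)}$, which is precisely the uniform distribution $\cU$ on $\M$.

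Concretely, I will first write
\[
    \eps^2_t = \Pi(\beta(e^t) m_t) - \Pi(0),
\]
and then invoke Lemma \ref{lem:Pi} to bound
\[
    \|\eps^2_t\| \le C|\beta(e^t) m_t| = C|\beta(e^t)|\,|m_t|.
\]
Next, for $t \ge t_0$, Assumption \ref{ass:beta} yields $|\beta(e^t)| \le at$, hence
\[
    \|\eps^2_t\| \le Cat\,|m_t|.
\]
Fix $\delta \in (0,\eta)$. Since $2a\kappa < \gamma$, Lemma \ref{prop:meantozero} applies and provides an almost surely finite random variable $K$ such that $|m_t| \le K e^{-\delta t}$ for all $t \ge t_0$. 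Combining these estimates yields
\[
    \|\eps^2_t\| \le CaKt\,e^{-\delta t} \quad \text{almost surely for all $t \ge t_0$,}
\]
so taking logarithms and dividing by $t$ gives
\[
    \limsup_{t\to\infty} \frac{1}{t}\log \|\eps^2_t\| \le -\delta \quad \text{almost surely.}
\]
Since $\delta < \eta$ was arbitrary, the conclusion follows by letting $\delta \uparrow \eta$.

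There is no real obstacle here, as the heavy lifting has already been carried out in Lemma \ref{prop:meantozero}. The proposition is essentially a corollary obtained by pulling the estimate on $m_t$ through the Lipschitz map $\Pi$ and absorbing the logarithmic factor $|\beta(e^t)| \le at$ into the exponential decay; the only subtlety worth noting is that one must choose $\delta$ strictly less than $\eta$ in Lemma \ref{prop:meantozero} and then let $\delta \uparrow \eta$ at the end, which is why the $\limsup$ in the statement is with the non-strict inequality $\le -\eta$.
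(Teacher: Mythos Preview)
Your proof is correct and follows essentially the same approach as the paper's: rewrite $\eps^2_t=\Pi(\beta(e^t)m_t)-\Pi(0)$, apply the Lipschitz bound from Lemma~\ref{lem:Pi} together with $|\beta(e^t)|\le at$ to obtain $\|\eps^2_t\|\le Ct\,|m_t|$, and conclude via Lemma~\ref{prop:meantozero}. The only cosmetic difference is that you spell out the final $\delta\uparrow\eta$ step explicitly, whereas the paper leaves it implicit; note also that the centering condition~\eqref{eq:assvcentered} is not actually needed to see $\Pi(0)=\cU$, which follows directly from~\eqref{eq:PI(m)}.
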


\begin{proof}
By \eqref{eq:def-eps2} and \eqref{eq:PI(m)},
\begin{equation*}
    \eps^2_t= \Pi(\beta(e^t)\mu_{e^t}(v))-\Pi(0),
\end{equation*}
so Lipschitz continuity (see Lemma \ref{lem:Pi}) and the logarithmic bound for $\abs{\beta}$ from Assumption \ref{ass:beta} imply
\begin{equation*}
    \norm{\eps^2_t}\le C t \abs{\mu_{e^t}(v)} = C t \abs{m_t}.
\end{equation*}
The claim now follows from Lemma \ref{prop:meantozero}.
\end{proof}

%\begin{remark}
%    By \eqref{eq:mtilde} and Lemma \ref{lem:Qt}, $|\tilde m_t|$ is not only a priori bounded for $t>t_0$, but by taking $t_0$ sufficiently large, the upper bound can in fact be arbitrarily close to $\norm{v}_\infty$ (the trivial bound for $\abs{m_t}$). Therefore, it is enough to have an inequality like \eqref{eq:convex} only for $|m|\le \norm{v}_\infty + \eps$ for some $\eps>0$, which in turn means that the supremum in the definition of $\Lambda$ does not need to be taken over all $m\in\R^N$, but only over a suitable compact subset (depending on $\beta$ and $v$). This minor optimization of the argument seems hardly useful in a general setting and is not even guaranteed to actually improve the result, but it illustrates where there is potential to do so for special cases.
%\end{remark}

\subsection{Putting the pieces together}\label{sec:mainproof}

Let $f\in C(\M)$. The unique solution to \eqref{eq:limitODEnewreal} at time $t\ge 0$ with the initial value $x\in\R$ is given by
\[
    \Phi_f(t,x)=e^{-t}x+(1-e^{-t})\cU(f),
\]
so the mapping $\Phi_f\colon [0,\infty)\times\R\to\R$ is the semiflow on $\R$ that is generated by \eqref{eq:limitODEnewreal}. Recall again the constant
\begin{equation*}
    \eta=\min\left\{\frac{\gamma}{2}-a\kappa, 1-\Lambda\beta_0\right\}>0
\end{equation*}
from Theorem \ref{thm:main}.

\begin{prop}\label{prop:apt}
    If $2a\kappa<\gamma$ and $f\in C^\infty(\M)$, then almost surely $(\mu_{e^t}(f))_{t\ge0}$ is a $(-\eta)$-pseudotrajectory of the semiflow $\Phi_f$, i.e. almost surely
    \begin{equation}\label{eq:apt}
        \limsup_{t\to\infty} \frac 1 t \log\left(\sup_{s\in[0,T]} \abs{ \mu_{e^{t+s}}(f)-\Phi_f(s,\mu_{e^t}(f))}\right) \le -\eta \quad \text{for all $T>0$}.
    \end{equation}
\end{prop}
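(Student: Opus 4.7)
The plan is to treat the identity \eqref{eq:ODE+eps}, tested against a fixed $f\in C^\infty(\M)$, as an inhomogeneous scalar linear ODE for $u(s):=\mu_{e^{t+s}}(f)$ with forcing $\eps^1_{t+s}(f)+\eps^2_{t+s}(f)$, and to compare its Duhamel representation with the explicit formula for $\Phi_f(s,u(0))=e^{-s}u(0)+(1-e^{-s})\cU(f)$. Fixing $T>0$ and $t\ge 0$, multiplying by the integrating factor $e^s$ and integrating yields
\[
    u(s)-\Phi_f(s,u(0)) = \int_0^s e^{-(s-r)}\eps^1_{t+r}(f)\,dr + \int_0^s e^{-(s-r)}\eps^2_{t+r}(f)\,dr,
\]
and the task reduces to bounding both integrals uniformly in $s\in[0,T]$ by a random quantity that decays like $e^{-(\eta-\delta)t}$ for each $\delta>0$.

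The $\eps^2$ term is the easy one. Since $|\eps^2_{t+r}(f)|\le\|f\|_\infty\|\eps^2_{t+r}\|$, Proposition \ref{prop:eps2} gives that for any $\delta>0$ there exists an almost surely finite random variable $K$ with $\|\eps^2_{r}\|\le K e^{-(\eta-\delta)r}$ for all $r\ge 0$, so
\[
    \sup_{s\in[0,T]}\Bigl|\int_0^s e^{-(s-r)}\eps^2_{t+r}(f)\,dr\Bigr|\le K\|f\|_\infty T\,e^{-(\eta-\delta)t}.
\]

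The $\eps^1$ term is the real obstacle, because Proposition \ref{prop:eps1} only controls the running integral of $\eps^1$, not its pointwise values. The natural remedy is integration by parts: setting $G(r):=\int_0^r\eps^1_{t+u}(f)\,du$, which by Proposition \ref{prop:eps1} (combined with $\gamma/2-a\kappa\ge\eta$) satisfies $\sup_{r\ge 0}|G(r)|\le K e^{-(\eta-\delta)t}$ almost surely for every $\delta>0$, one computes
\[
    \int_0^s e^{-(s-r)}\eps^1_{t+r}(f)\,dr = G(s) - \int_0^s e^{-(s-r)}G(r)\,dr,
\]
so both resulting pieces are bounded uniformly in $s\in[0,T]$ by a constant multiple of $K e^{-(\eta-\delta)t}$.

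Combining these two estimates would produce
\[
    \sup_{s\in[0,T]}|u(s)-\Phi_f(s,u(0))|\le K(1+T\|f\|_\infty)\,e^{-(\eta-\delta)t},
\]
and taking logarithms, dividing by $t$, letting $t\to\infty$, and using that $\delta>0$ is arbitrary would give \eqref{eq:apt}. The only genuinely delicate point is the $\eps^1$-integral; once one sees that the averaged control from Proposition \ref{prop:eps1} has to be exploited through integration by parts rather than through a pointwise bound, the remainder of the argument is a direct calculation.
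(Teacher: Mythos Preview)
Your argument is correct and mirrors the paper's proof essentially step for step: the paper also obtains the Duhamel representation $\mu_{e^{t+s}}(f)-\Phi_f(s,\mu_{e^t}(f))=\int_t^{t+s}e^{-(t+s-r)}(\eps^1_r+\eps^2_r)(f)\,dr$ via variation of constants, then handles the $\eps^1$-part by the same integration by parts (writing $\int_t^{t+s}e^{-(t+s-r)}\eps^1_r(f)\,dr=\int_t^{t+s}\eps^1_r(f)\,dr-\int_t^{t+s}e^{-(t+s-r)}\bigl(\int_t^r\eps^1_u(f)\,du\bigr)\,dr$) before invoking Propositions~\ref{prop:eps1} and~\ref{prop:eps2}. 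Your write-up is slightly more explicit about the final estimates, but the method is identical.
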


\begin{proof}
    By \eqref{eq:ODE+eps} and \eqref{eq:limitODEnewreal} we have
    \begin{equation*}%\label{eq:aptevo}
        \mu_{e^{t+s}}(f) - \Phi_f(s,\mu_{e^t}(f))
        = -\int_0^s \left(\mu_{e^{t+r}}(f) - \Phi_f(r,\mu_{e^t}(f))\right)dr + \int_0^s\left(\eps^1_{t+r}+\eps^2_{t+r}\right)(f)dr,
    \end{equation*}
    so, by variation of constants,
    \begin{equation}\label{eq:aptevosolution}
        \mu_{e^{t+s}}(f) - \Phi_f(s,\mu_{e^t}(f))
    %    = \int_0^se^{-(s-r)}\left(\eps^1_{t+r}+\eps^2_{t+r}\right)(f)dr
        = \int_t^{t+s}e^{-(t+s-r)}\left(\eps^1_r+\eps^2_r\right)(f)dr.
    \end{equation}
    Since integration by parts yields
    \[
        \int_t^{t+s}e^{-(t+s-r)} \eps^1_r(f) dr = \int_t^{t+s} \eps^1_r(f) dr - \int_t^{t+s} e^{-(t+s-r)}\left(\int_t^r\eps^1_u(f)du\right) dr,
    \]
    the claim follows from \eqref{eq:aptevosolution} and Propositions \ref{prop:eps1} and \ref{prop:eps2}.
\end{proof}

Intuitively, Proposition \ref{prop:apt} says that $\mu_{e^t}(f)$ is exponentially close to the behavior of a solution of \eqref{eq:limitODEnewreal}. Since any solution of \eqref{eq:limitODEnewreal} converges exponentially fast to $\cU(f)$, Theorem \ref{thm:main} now follows from a general result about pseudotrajectories.

\begin{proof}[Proof of Theorem \ref{thm:main}]
    Clearly,
    \begin{equation*}
    \limsup_{t\to\infty} \frac 1 t \log\abs{\Phi_f(t,x)-\cU(f)} = -1 \quad \text{for all $x\in\R$.}
    \end{equation*}
    Thanks to this and Proposition \ref{prop:apt}, all of the conditions of part (i) of \cite[Lemma 8.7]{Benaim1999} (with $B=\R$, $K=\{\cU(f)\}$, $X(t)=\mu_{e^t}(f)$, $Y(t)=\cU(f)$, $\alpha=-\eta >-1=\lambda$) are fulfilled, and hence we get
    \begin{equation}\label{eq:ergodic-d-eps}
        \limsup_{t\to\infty} \frac 1 t \log \abs{\mu_{e^t}(f)-\cU(f)} \le -\eta \quad \text{almost surely,}
    \end{equation}
    which is equivalent to \eqref{eq:thm}.
\end{proof}

\begin{remark}
Note that the final step of the proof of Proposition \ref{prop:apt} also works with the supremum in \eqref{eq:apt} being taken over all $s\ge0$ instead of only $s\in[0,T]$. However, this stronger variant of the pseudotrajectory property is not needed for the argument in the proof of Theorem \ref{thm:main} to work.
\end{remark}

\begin{remark}
One can also use a more technical variant of the argument given in this section to get a slightly different version of Theorem \ref{thm:main} that provides a convergence rate with respect to a family of random metrics on $\cM(\M)$. If $(f_n)_{n\in\N} \subset C^\infty(\M)$ is dense in the unit sphere of $C(\M)$ and $c=(c_n)_{n\in\N}\subset (0,\infty)$ is summable, then
\begin{equation*}%\label{eq:metric}
    d(\mu,\nu)
    =\sum_{n=1}^\infty c_n\abs{\mu(f_n)-\nu(f_n)}
    \quad \text{for all $\mu,\nu\in\cM(\M)$}
\end{equation*}
is a metric on $\cM(\M)$ that induces the weak convergence of measures, i.e. for $\mu,\mu_1,\mu_2,\ldots\in \cM(\M)$ we have
\begin{equation*}
    d_c(\mu_k,\mu)\xrightarrow{k\to\infty} 0
    \quad \iff \quad
    \mu_k(f)\xrightarrow{k\to\infty} \mu(f) \quad \text{for all $f\in C(\M)$.}
\end{equation*}
By Proposition \ref{prop:eps1}, there exist finite random variables $K_n(\eps)$ such that almost surely
\[
    \sup_{s\ge0} \abs{\int_t^{t+s}\eps^1_r(f_n)dr} \le K_n(\eps) e^{-\left(\frac{\gamma}{2}-a\kappa-\eps\right)t} \quad \text{for all $t\ge t_0$}.
\]
With the random sequence $c(\eps)=(c_n(\eps))_{n\in\N}$ defined by $c_n(\eps)=(K_n(\eps))^{-1}2^{-n}$, the same argument as in the proof of Proposition \ref{prop:apt} yields that almost surely $(\mu_{e^t})_{t\ge0}$ is a $(-\eta+\eps)$-pseudotrajectory of the semiflow
\[
    \Phi\colon [0,\infty)\times\cM(\M)\to\cM(\M),\quad (t,\nu_0)\mapsto e^{-t}\nu_0+(1-e^{-t})\cU,
\]
on $(\cM(\M),d_{c(\eps)})$. Ultimately, this leads to the conclusion that under the same assumptions as in Theorem \ref{thm:main} we have
\begin{equation*}
    \limsup_{t\to\infty} \frac{\log d_{c(\eps)}(\mu_t,\cU)}{\log t}
    \le -(\eta-\eps)
    \quad \text{almost surely for all $\eps>0$.}
\end{equation*}
\end{remark}

\section{A closer investigation of the case $\M=\S^n$ and $v(x)=x$}\label{sec:sphere}

For this entire section, let
\[
    \M=\S^n \quad \text{and} \quad v(x)=x.
\]
In the case of weak self-interaction, i.e.\ $\beta\equiv b$, \cite[Theorem 4.5]{sidI} then implies that\footnote{Note that our notation differs slightly from that in \cite{sidI}: the parameter $a$ there corresponds to what is $b/2$ in our notation, and there is another factor 2 in \cite[(11)]{sidI} that is not in our definition of $\Pi$ (see Remark \ref{rem:sqrt2}).}
\begin{equation}\label{eq:sidI-thm}
    \mu_t \xrightarrow{w} \cU \quad \text{almost surely}\quad \Leftrightarrow \quad b\ge -(n+1),
\end{equation}
while Theorem \ref{thm:main} implies
\begin{equation}\label{eq:sidI-thm-comparison}
    b>-\frac{1}{\Lambda}
    \quad \Rightarrow \quad
    \limsup_{t\to\infty} \frac{\log \abs{(\mu_t-\cU)(f)}}{\log t} \le -\eta(b)<0 \quad \text{almost surely for all $f\in C^\infty(\S^n)$}
\end{equation}
for a suitable $\eta(b)>0$ (see Example \ref{ex:weak}). Because of \eqref{eq:sidI-thm} and \eqref{eq:sidI-thm-comparison}, we already know that $\Lambda\ge (n+1)^{-1}$. In this section, we provide a way to calculate $\Lambda$, show numerically that it is in fact strictly larger than $(n+1)^{-1}$, but then prove that the conclusion of \eqref{eq:sidI-thm-comparison} nevertheless actually holds for all $b> -(n+1)$.

\subsection{Calculating $\Lambda$}\label{sec:calculatingLambda}

In the following, for any $m\in\R^{n+1}$ we use the notation
\[
        \bar m =
        \begin{cases}
        \frac{m}{|m|}, & \text{if $m\neq 0$,}\\
        0, & \text{if $m= 0$}.
        \end{cases}
\]

\begin{lem}\label{lem:PiCov}
Let
\begin{align*}%\label{eq:rho}
    \rho(r)
    =-\frac{\int_0^\pi \cos x e^{-r\cos x} (\sin x)^{n-1}dx}{\int_0^\pi e^{-r\cos x} (\sin x)^{n-1}dx}
    \quad \text{for all $r\ge0$}.
\end{align*}
\begin{enumerate}
    \item
    For all $m\in\R^{n+1}$ we have    
    \begin{equation}\label{eq:expectation}
        \Pi(m)(v)= -\rho(|m|) \bar m.
    \end{equation}
    \item
    We have
    \[
        \lim_{r\to 0}\frac{\rho(r)}{r}=\frac{1}{n+1},
    \]
    and we therefore interpret $\frac{\rho(0)}{0}$ as $\frac{1}{n+1}$ in the following.
    \item
    For all $m\in\R^{n+1}$ we have
    \begin{equation*}%\label{eq:covariance}
        \Cov_{\Pi(m)}(v)
        =\left(\rho'(|m|)-\frac{\rho(|m|)}{|m|} \right) \bar m \bar m^T  +          \frac{\rho(|m|)}{|m|} 1_{(n+1)\times(n+1)}   
    \end{equation*}
    and its largest eigenvalue is given by $\lambda(|m|)$, where
    \begin{equation}\label{eq:lambda}
        \lambda(r)=\frac{2\rho(r)}{r}-\rho'(r) \quad \text{for all $r\ge0$}.
    \end{equation}
    \item
    We have
    \begin{equation}\label{eq:lambdamax}
        \Lambda=\max_{r\ge 0} \lambda(r) \in \left[\frac{1}{n+1}, \frac{2}{n+1}\right).
    \end{equation}
\end{enumerate}
\end{lem}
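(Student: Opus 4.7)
My strategy is to exploit the $O(n+1)$-symmetry of $(\S^n, v)$, reducing everything to one-variable calculations in spherical coordinates with pole along $\bar m$. For part (i), fixing $m\ne 0$ and using $Z(Om)=Z(m)$ and $\Pi(Om)(v)=O\Pi(m)(v)$ for every $O\in O(n+1)$, and specializing $O$ to the stabilizer of $\bar m$ (a copy of $O(n)$), forces $\Pi(m)(v)$ to be parallel to $\bar m$. Taking $m=re_1$ and writing $x_1=\cos\theta$ in spherical coordinates adapted to $e_1$, with $dx=(\sin\theta)^{n-1}d\theta\,d\sigma_{\S^{n-1}}$, reduces the scalar coefficient to a ratio of one-dimensional integrals on $[0,\pi]$ that matches the definition of $-\rho(r)$. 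The case $m=0$ follows from \eqref{eq:assvcentered}.

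For part (ii), I Taylor-expand $e^{-r\cos x}=1-r\cos x+O(r^2)$ in both numerator and denominator of $\rho$. The symmetry $x\mapsto\pi-x$ (sending $\cos x$ to $-\cos x$ while preserving $(\sin x)^{n-1}$) kills the constant term of the numerator and the linear term of the denominator, leaving $\rho(r)/r\to\int_0^\pi\cos^2 x(\sin x)^{n-1}dx/\int_0^\pi(\sin x)^{n-1}dx$. This ratio equals $\cU(v_1^2)$, and by the symmetry-equivalence of the $\cU(v_i^2)$ combined with $\sum_i v_i^2\equiv 1$ on $\S^n$, it evaluates to $1/(n+1)$. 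For part (iii), Lemma \ref{lem:Pi-Z} gives $\Cov_{\Pi(m)}(v)=\Hess\log Z(m)$, and rotational invariance together with part (i) shows $\log Z(m)=h(|m|)$ with $h'(r)=\rho(r)$. A chain-rule computation based on $\partial_j(m_i/|m|)=\delta_{ij}/|m|-\bar m_i\bar m_j/|m|$ then immediately yields the stated rank-one-plus-identity formula for the covariance. Identifying the largest eigenvalue with $\lambda(r)=2\rho(r)/r-\rho'(r)$ will hinge on the Laplace-type identity $\rho'(r)+\rho(r)^2+n\rho(r)/r=1$, which I derive from $\int_{\S^n}\Delta_{\S^n}(e^{-rx_1})d\sigma=0$ together with $\Delta_{\S^n}x_1=-nx_1$; expanding $\Delta_{\S^n}(e^{-rx_1})$ explicitly produces exactly this identity after dividing by $r Z(re_1)$.

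Part (iv) is then a short calculus exercise. The lower bound $\Lambda\ge 1/(n+1)$ is immediate from $\lambda(0)=2/(n+1)-1/(n+1)=1/(n+1)$. For the strict upper bound I would combine $\rho(r)/r\le 1/(n+1)$ for $r\ge 0$, with equality only at $r=0$, with $\rho'(r)\ge 0$ (which is clear from its interpretation as a variance); this gives $\lambda(r)<2/(n+1)$ pointwise. Monotonicity of $\rho(r)/r$ can be established by differentiating and invoking the Laplace identity from part (iii). Since $\lambda$ is continuous on $[0,\infty)$ with $\lambda(r)\to 0$ as $r\to\infty$, the supremum is attained at some finite $r^*\ge 0$, and the strict inequality is preserved. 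The main obstacle I anticipate is the eigenvalue identification in part (iii): the rank-one-plus-identity decomposition naturally produces two distinct eigenvalues $\rho'(r)$ (along $\bar m$) and $\rho(r)/r$ (with multiplicity $n$), and expressing the dominant one in the closed form $2\rho(r)/r-\rho'(r)$ is not purely algebraic but requires using the Laplace identity essentially.
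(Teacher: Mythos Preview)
For parts (i), (ii), and the covariance formula in (iii), your approach coincides with the paper's: rotational symmetry pins $\Pi(m)(v)$ to the $\bar m$-axis and reduces everything to a ratio of one-dimensional integrals; the limit $\rho(r)/r\to\tfrac{1}{n+1}$ follows from expanding the exponential and using the $x\mapsto\pi-x$ antisymmetry of $\cos x$; and the Hessian of the radial function $h(|m|)=\log Z(m)$ with $h'=\rho$ gives the displayed rank-one-plus-identity decomposition. Your Laplace-type identity $\rho'+\rho^{2}+n\rho/r=1$ is equivalent to the paper's integration-by-parts relation $H'(r)=\tfrac{r}{n}\big(H(r)-H''(r)\big)$ once one divides by $H$ and uses $\rho=H'/H$, $\rho'=H''/H-\rho^{2}$. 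For part (iv) you likewise argue as the paper does, combining $\rho'>0$ (a variance) with the strict monotonicity of $\rho(r)/r$.

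The step that would fail is exactly the obstacle you flag at the end: identifying the largest eigenvalue with $\lambda(r)=2\rho(r)/r-\rho'(r)$. You have correctly computed the eigenvalues of $\big(\rho'-\rho/r\big)\bar m\bar m^{T}+(\rho/r)I$ as $\rho'(r)$ (along $\bar m$) and $\rho(r)/r$ (on $\bar m^{\perp}$), and since $0<\rho'(r)<\rho(r)/r$ for every $r>0$ the largest eigenvalue is $\rho(r)/r$, which is \emph{strictly smaller} than $\lambda(r)$ there. No use of your Laplace identity can collapse this discrepancy: it relates $\rho'$ and $\rho/r$ but cannot make $\rho(r)/r$ equal to $2\rho(r)/r-\rho'(r)$. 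The paper reaches the stated conclusion by writing $\sup_{|x|=1}x^{T}(cP+dI)x=|c|+d$ with $P=\bar m\bar m^{T}$, $c=\rho'-\rho/r$, $d=\rho/r$; but for $c<0$ this supremum equals $d$, not $|c|+d$, so the paper's computation slips at precisely this point. In short, the eigenvalue assertion in part (iii) is not correct as stated (the largest eigenvalue is $\rho(|m|)/|m|$, whose supremum over $m$ is $\tfrac{1}{n+1}$), and neither your route nor the paper's can establish it.
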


\begin{figure}
    \centering
    \includegraphics[width=0.48\textwidth]{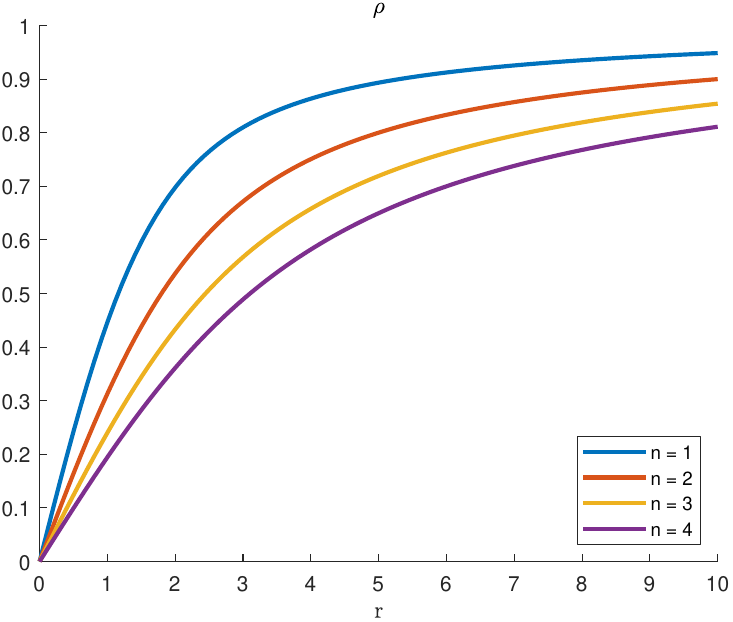}
    \includegraphics[width=0.48\textwidth]{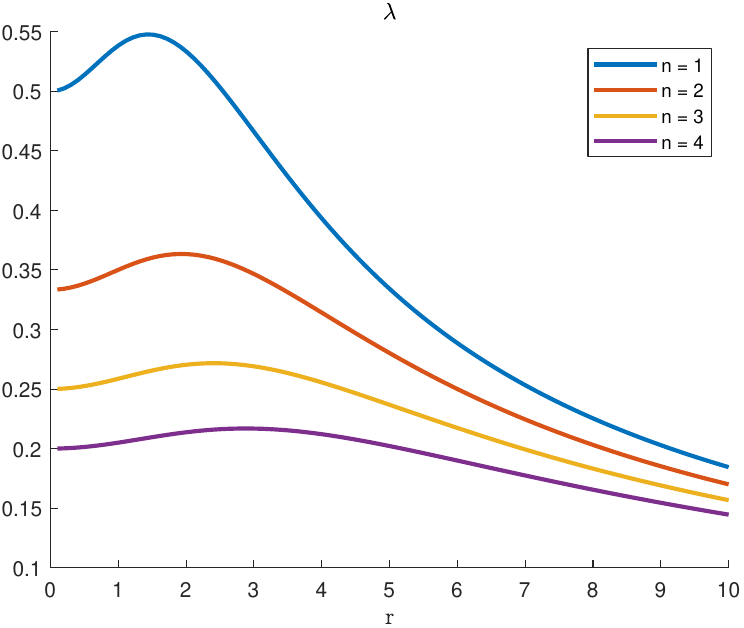}
    \caption{Plots of $\rho$ and $\lambda$ for $n\in\{1,2,3,4\}$.}
    \label{fig:rholambda}
\end{figure}

\begin{proof}
    The first part of this lemma is just a reformulation of \cite[Lemma 4.7]{sidI}. Note that for all $r\ge0$
    \begin{align*}%\label{eq:H}
        \rho(r)=\frac{H'(r)}{H(r)} \quad \text{with} \quad  H(r)=\int_0^\pi e^{-r\cos x} (\sin x)^{n-1}dx
    \end{align*}
    and hence
    \begin{equation}\label{eq:rho'}
        \rho'(r)=\frac{H''(r)}{H(r)}-\left(\frac{H'(r)}{H(r)}\right)^2>0
    \end{equation}
    where the inequality follows from Cauchy-Schwarz. As shown in the proof of \cite[Lemma 4.8]{sidI},
    \begin{equation*}%\label{eq:monotony-H''/H}
        \frac{d}{dr}\frac{H''(r)}{H(r)}>0, \quad \frac{H''(0)}{H(0)}=\frac{1}{n+1}, %\quad \frac{H''(r)}{H(r)}\xrightarrow{r\to\infty} 1,
    \end{equation*}
    and since an integration by parts yields
    \begin{equation}\label{eq:ODE-H}
        H'(r)=\frac{r}{n}(H(r)-H''(r)) \quad \text{for all $r\ge0$,}
    \end{equation}
    we get
    \begin{equation}\label{eq:monotony-rho/r}
        \frac{d}{dr}\frac{\rho(r)}{r}=\frac{d}{dr}\left(\frac{1}{n}\left(1-\frac{H''(r)}{H(r)}\right)\right)<0, \quad \frac{\rho(r)}{r}\xrightarrow{r\to0}\frac{1}{n+1}.%, \quad \frac{\rho(r)}{r}\xrightarrow{r\to\infty}0.
    \end{equation}
    In particular, we have
    \[
        0>\frac{d}{dr}\frac{\rho(r)}{r}=\frac{\rho'(r)}{r}-\frac{\rho(r)}{r^2} \quad \text{for all $r>0$}
    \]
    so, together with \eqref{eq:rho'},
    \begin{equation}\label{eq:rho'-bounds}
        0<\rho'(r)<\frac{\rho(r)}{r} \quad \text{for all $r>0$}.
    \end{equation}
    Combining \eqref{eq:expectation} with Lemma \ref{lem:Pi-Z}, we get
    \begin{align*}
        \Cov_{\Pi(m)}(v)
        &=\left( \partial_{m_i} \left(\frac{\rho(|m|)}{|m|} m_j\right)  \right)_{i,j=1,\ldots,n+1}=\left(\rho'(|m|)-\frac{\rho(|m|)}{|m|} \right)\bar m \bar m^T  +          \frac{\rho(|m|)}{|m|} 1_{(n+1)\times(n+1)}
    \end{align*}
    and hence its largest eigenvalue is
    \begin{align*}%\label{eq:lambda-pre}
        \sup_{x \in \S^n} x^T \Cov_{\Pi(m)}(v)x = \abs{\rho'(|m|)-\frac{\rho(|m|)}{|m|}} + \frac{\rho(|m|)}{|m|}=\lambda(|m|),
    \end{align*}
    where the last equality uses \eqref{eq:rho'-bounds}. Plugging \eqref{eq:rho'-bounds} into \eqref{eq:lambda} yields
    \begin{equation*}
        \frac{\rho(r)}{r}<\lambda(r)<\frac{2\rho(r)}{r} \quad \text{for all $r>0$,}
    \end{equation*}
    which, in combination with \eqref{eq:monotony-rho/r}, implies \eqref{eq:lambdamax}.
\end{proof}

\begin{remark}
    With the help of Lemma \ref{lem:PiCov}, one can easily show that if $(m_k)_{k\in\N}\subset\R^{n+1}$ satisfies $|m_k|\to\infty$ and $\bar m_k\to m\in\S^n$ for $k\to\infty$, then $\Pi(m_k)\xrightarrow{w}\delta_{-m}$ for $k\to\infty$.
\end{remark}

\begin{table}
    \centering
    \begin{tabular}{|c||c|c|c|} \hline
        $n$ & $\Lambda=\max \lambda $ & $\Lambda\cdot (n+1)$ & $\argmax \lambda$ \\ \hline\hline
        1 & 0.548 & 1.096 & 1.442 \\
        2 & 0.363 & 1.090 & 1.930 \\
        3 & 0.272 & 1.087 & 2.405 \\
        4 & 0.217 & 1.084 & 2.876 \\
        5 & 0.180 & 1.083 & 3.345 \\
        6 & 0.154 & 1.081 & 3.812 \\
        7 & 0.135 & 1.080 & 4.278 \\
        8 & 0.120 & 1.079 & 4.744 \\
        9 & 0.108 & 1.079 & 5.210 \\
        10 & 0.098 & 1.078 & 5.676 \\
        20 & 0.051 & 1.075 & 10.329 \\
        50 & 0.021 & 1.073 & 24.288 \\
        100 & 0.011 & 1.073 & 47.552 \\ \hline
    \end{tabular}
    \caption{Numerical approximations for $\Lambda$, $\Lambda\cdot (n+1)$, and $\argmax \lambda$.}
    \label{table:numerics}
\end{table}

\begin{remark}\label{rem:numerics}
    It follows from \eqref{eq:rho'} and \eqref{eq:ODE-H} that $\rho$ satisfies the differential equation
    \begin{equation*}%\label{eq:ODErho}
        \rho'(r)=1-\rho(r)\left(\frac{n}{r}+\rho(r)\right),
    \end{equation*}
    so thanks to \eqref{eq:lambda}, we can express both $\lambda(r)$ and $\lambda'(r)$ as functions of $r$ and $\rho(r)$. This makes it easy to calculate $\lambda$ and $\lambda'$ and thus approximate the maximum of $\lambda$ numerically. Simulations suggest that $\lambda(r)$ attains $\Lambda$ as its unique local and global maximum in a position $r=\argmax \lambda$ that grows linearly in $n$. Furthermore, these simulations suggest that $\Lambda \cdot (n+1)$ is indeed strictly greater than 1, even though it is decreasing in $n$ and the upper bound $2$ from \eqref{eq:lambdamax} is far from optimal. Still, even the suboptimal \eqref{eq:lambdamax} is a vast improvement over the trivial bound from \eqref{eq:Lambdatrivialbound}. See Table \ref{table:numerics} for some approximate values and Figure \ref{fig:rholambda} for a visualisation of $\rho$ and $\lambda$.
\end{remark}

\subsection{Improving Theorem \ref{thm:main} in the case of weak self-attraction}\label{sec:afterthought}

The following proposition means that we can close the gap between Theorem \ref{thm:main} (for $\M=\S^n$, $v(x)=x$ and constant $\beta$) and \cite[Theorem 4.5 (i)]{sidI} for all cases in which the uniform distribution is the limit.

\begin{prop}\label{prop:weaksphere}
    Let $\beta\equiv b>-(n+1)$ and $f\in C^\infty\left(\S^n\right)$. Then
    \begin{equation}\label{eq:thmweaksphere}
        \limsup_{t\to\infty} \frac{\log \abs{(\mu_t-\cU)(f)}}{\log t} \le -\min\left\{\frac12, 1+ \frac{b}{n+1}\right\} \quad \text{almost surely.}
    \end{equation}
\end{prop}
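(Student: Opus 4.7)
The plan is to follow the proof of Theorem \ref{thm:main} step by step and to observe that Assumption \ref{ass:betaLambda} enters only through the strong convexity bound \eqref{eq:convex} in Lemma \ref{lem:gradient}, which is in turn invoked only once in the proof of Lemma \ref{prop:meantozero} (at the step leading to \eqref{eq:SDI-n}). Everything else in Sections \ref{sec:eps1}, \ref{sec:eps2}, and \ref{sec:mainproof} depends only on Assumptions \ref{ass:V} and \ref{ass:beta} and on the positivity of the final rate. It therefore suffices to replace the constant $1-\Lambda\beta_0$ by a sharper convexity constant that stays positive on the full range $b>-(n+1)$, and to track how this propagates into the final estimate.

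The key step is to use Lemma \ref{lem:PiCov} in place of Assumption \ref{ass:betaLambda}. With $\M=\S^n$, $v(x)=x$, and $\beta\equiv b$, the formula $\Pi(m)(v)=-\rho(\abs{m})\bar m$ plugged into \eqref{eq:gradientJt} gives
\[
    \nabla J_t(m) = m + \rho(\abs{b}\abs{m})\,\overline{bm}, \quad m\in\R^{n+1},
\]
and hence
\[
    m\cdot\nabla J_t(m) = \abs{m}^2 + \mathrm{sign}(b)\,\rho(\abs{b}\abs{m})\,\abs{m}.
\]
For $b\ge 0$ this is bounded below by $\abs{m}^2$. For $b<0$, the monotonicity relation \eqref{eq:monotony-rho/r} yields $\rho(\abs{b}\abs{m})/\abs{m}\le \abs{b}/(n+1)$, so
\[
    m\cdot\nabla J_t(m) \ge \left(1+\frac{b}{n+1}\right)\abs{m}^2.
\]
In both cases, the effective convexity constant is at least $\min\{1,\,1+b/(n+1)\}$, which is strictly positive precisely when $b>-(n+1)$.

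Substituting this new constant into the derivation of \eqref{eq:SDI-n} in place of $1-\Lambda\beta_0$ and keeping the bootstrap argument of step 4 of the proof of Lemma \ref{prop:meantozero} unchanged, one would obtain
\[
    e^{\delta t} m_t \xrightarrow{t\to\infty} 0 \quad \text{almost surely for every } \delta<\min\left\{\frac{\gamma}{2}-a\kappa,\; 1+\frac{b}{n+1}\right\}.
\]
Since $\beta\equiv b$ is constant, Example \ref{ex:weak} allows one to take $\gamma=1$ with $a>0$ arbitrarily small, so the admissible range of $\delta$ extends to any $\delta<\min\{1/2,\,1+b/(n+1)\}$. Proposition \ref{prop:eps2}, Proposition \ref{prop:apt}, and the application of \cite[Lemma 8.7]{Benaim1999} in the proof of Theorem \ref{thm:main} then carry through without modification and deliver \eqref{eq:thmweaksphere}.

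The main obstacle is essentially bookkeeping rather than substance: one must verify carefully that no step outside of the derivation of \eqref{eq:SDI-n} secretly relies on Assumption \ref{ass:betaLambda}. The only mildly delicate ingredient is the sharp limit $\rho(s)/s\to 1/(n+1)$ as $s\to 0$ together with the monotonicity of $\rho(s)/s$ on $[0,\infty)$; both are already recorded in the proof of Lemma \ref{lem:PiCov}, so the whole argument reduces to this single arithmetic rearrangement.
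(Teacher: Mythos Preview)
Your argument is correct, and it is actually cleaner than the paper's own proof. The paper does \emph{not} obtain a global convexity bound; instead it bounds the Hessian of $J_t$ via the largest eigenvalue $\lambda(r)$ of $\Cov_{\Pi(bm)}(v)$, which (as Remark~\ref{rem:numerics} shows) genuinely exceeds $1/(n+1)$ away from the origin. Consequently the paper must first invoke the qualitative convergence $\mu_t\xrightarrow{w}\cU$ from \cite[Theorem 4.5]{sidI} to know that $m_t\to 0$ almost surely, then localize to a small ball $\{|m|\le\theta\}$ where $\sup_{r\le\theta|b|}\lambda(r)$ is close to $\lambda(0)=1/(n+1)$, introduce a random entrance time $\tau$ into that ball, and finally let $\theta\downarrow 0$ to recover the sharp constant $1+b/(n+1)$.

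Your route sidesteps all of this by working with the first-order quantity $m\cdot\nabla J_t(m)$ directly rather than through the Hessian. The point is that only the \emph{radial} component of $\nabla J_t(m)$ matters for \eqref{eq:SDI-n}, and that component is governed by $\rho(r)/r$, which by \eqref{eq:monotony-rho/r} is globally bounded by $1/(n+1)$, whereas the full eigenvalue $\lambda(r)=2\rho(r)/r-\rho'(r)$ is not. This yields the inequality $m\cdot\nabla J_t(m)\ge(1+b/(n+1))|m|^2$ for \emph{all} $m$, so no localization, no random time, and no appeal to \cite{sidI} are needed; the proof of Lemma~\ref{prop:meantozero} then runs verbatim with $1-\Lambda\beta_0$ replaced by $1+b/(n+1)$. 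In short, the paper's detour through the Hessian is what creates the gap between $\Lambda^{-1}$ and $n+1$, and your observation that the radial bound is already sharp removes that gap at no cost.
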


\begin{remark}
Note that we can technically also include the case $b=-(n+1)$ in Proposition \ref{prop:weaksphere}, but then its conclusion is not useful. Indeed, the right hand side of \eqref{eq:thmweaksphere} becomes 0 in this case, so this property is in fact weaker than what we already know from \eqref{eq:sidI-thm}.
\end{remark}

\begin{proof}[Proof of Proposition \ref{prop:weaksphere}]
    The case $b\ge0$ is already covered entirely in Example \ref{ex:weak}, so we assume that
    \[
        -(n+1)<b<0.
    \]
    Since Proposition \ref{prop:eps1} implies
    \[
        \limsup_{t\to\infty} \frac 1 t \log\left(\sup_{s\ge0} \abs{\int_t^{t+s}\eps^1_r(f)dr}\right) \le -\frac12 \quad \text{almost surely}
    \]
    (see Example \ref{ex:weak}), it only remains to show that
	\begin{equation}\label{eq:eps2weaksphere}
	        \limsup_{t\to\infty} \frac 1 t \log \norm{\eps^2_t} \le -\min\left\{\frac12, 1- \frac{|b|}{n+1}\right\} \quad \text{almost surely},
	\end{equation}
	since then the proof can be completed in the exact same way as in Section \ref{sec:mainproof}. In order to prove \eqref{eq:eps2weaksphere}, we will need to revisit and refine the arguments from Lemmas \ref{lem:gradient} and \ref{prop:meantozero} (the notation of which we adapt in the sequel) and make explicit use of \cite[Theorem 4.5 (i)]{sidI}.
	
    Since the second and third parts of Lemma \ref{lem:PiCov} imply
    \[
        \lim_{r\to0}\lambda(r)=\lambda(0)=\frac{1}{n+1}<|b|^{-1},
    \]
    we can choose $\theta>0$ such that
    \[
        \zeta:=\sup_{m\in\R^{n+1}, |m|\le \theta} \left(\sup_{x\in\S^n} x^T\Cov_{\Pi(bm)}(v)x\right)
        =\sup_{r\in[0,\theta |b|]}\lambda(r)<|b|^{-1}.
    \]
    Then with the same argument as in Lemma \ref{lem:gradient} we get
    \begin{equation}\label{eq:convexeps}
        m \cdot \nabla J_t (m) \ge (1- |b| \zeta) \abs{m}^2 \quad \text{for all $m\in\R^{n+1}$ with $|m|\le\theta$.}
    \end{equation}
    We already know from \cite[Theorem 4.5 (i)]{sidI} that almost surely $\mu_t\xrightarrow{w}\cU$ (see \eqref{eq:sidI-thm}), so $m_t\to 0$ and hence also $\tilde m_t \to 0$ (see \eqref{eq:mtilde} and Lemma \ref{lem:Qt}). Therefore, there is an almost surely finite random time $\tau=\tau(\theta)$ such that $|\tilde m_t| \le \theta$ for all $t>\tau$, so with \eqref{eq:convexeps} we get
    \begin{equation}\label{eq:convex'}
        -\tilde m_t \cdot \nabla J_t (\tilde m_t) \le - (1- |b| \zeta) \abs{\tilde m_t}^2 \quad \text{for all $t\ge \tau$.}
    \end{equation}
    Also note that
    \begin{equation}\label{eq:convex''}
        -\tilde m_t \cdot \nabla J_t (\tilde m_t) \le C \quad \text{for all $t\in [t_0,\tau)$.}
    \end{equation}
    With
    \[
        0<\alpha< \min\left\{ 1, 2-2|b| \zeta\right\},
    \]
    we can show as in the proof of Lemma \ref{prop:meantozero} that
    \begin{equation}\label{eq:SDI-n'}
        d|\tilde m_t|^2\le \left( -2\tilde m_t \cdot \nabla J_t (\tilde m_t) + C e^{-\alpha t}\right)dt+2 \tilde m_t \cdot dM_t \quad \text{for all $t\ge t_0$.}
    \end{equation}
    If we set $\xi:=2- 2|b| \zeta$, plug \eqref{eq:SDI-n'} into
    \[
        d\left( e^{\xi t}|\tilde m_t|^2 \right)=\xi e^{\xi t}|\tilde m_t|^2dt+ e^{\xi t}d|\tilde m_t|^2,
    \]
    and then apply \eqref{eq:convex'} and \eqref{eq:convex''}, we can estimate
    \begin{align*}%\label{eq:step4'}
    \begin{split}
        |\tilde m_t|^2
        &\le e^{-\xi (t-t_0)} |\tilde m_{t_0}|^2 + C\int_{t_0}^t e^{-\xi (t-r)-\alpha r} dr %\\       &\hspace{4.5cm}
        + C \int_{t_0}^{\tau} e^{-\xi (t-r)} dr + 2\int_{t_0}^t e^{-\xi (t-r)}\tilde m_r \cdot  dM_r \\
        &\le K e^{-\alpha t} + 2e^{-\xi t}\int_{t_0}^t e^{\xi r}\tilde m_r\cdot dM_r,
    \end{split}
    \end{align*}
    where $K$ is a positive, almost surely finite random number. Now, \eqref{eq:eps2weaksphere} follows from the same line of reasoning as in the last step of the proof of Lemma \ref{prop:meantozero} and in the proof of Proposition \ref{prop:eps2}.
\end{proof}

\section*{Acknowledgements}
The authors thank the anonymous referees for their careful reading of the manuscript and for many helpful suggestions to improve it.

\section*{Funding}
The first author was supported by Deutsche Forschungsgemeinschaft (DFG, German Research Foundation)-Project No. 233630050-TRR 146 Multiscale Simulation Methods for Soft Matter Systems. This research has been conducted within the F\'ed\'eration Parisienne de Mod\'elisation Math\'ematique (FP2M)-CNRS FR 2036. This research has been conducted as part of the project Labex MME-DII (ANR11-LBX-0023-01).

%%%%%%%%%%%%%%%%%%%%%%%%%%%%%%%%%%%%%%%%%%%%%%%%%%%%%%%%%%%%%
%%                  The Bibliography                       %%
%%                                                         %%
%%  imsart-number.bst  will be used to                     %%
%%  create a .BBL file for submission.                     %%
%%                                                         %%
%%  Note that the displayed Bibliography will not          %%
%%  necessarily be rendered by Latex exactly as specified  %%
%%  in the online Instructions for Authors.                %%
%%                                                         %%
%%  MR numbers will be added by VTeX.                      %%
%%                                                         %%
%%  Use \cite{...} to cite references in text.             %%
%%                                                         %%
%%%%%%%%%%%%%%%%%%%%%%%%%%%%%%%%%%%%%%%%%%%%%%%%%%%%%%%%%%%%%

%% if your bibliography is in bibtex format, uncomment commands:
%\bibliographystyle{imsart-number} % Style BST file
%\bibliography{bibliography}       % Bibliography file (usually '*.bib')

%% or include bibliography directly:

\end{document}